\numberwithin{equation}{section}
\numberwithin{figure}{section}
\theoremstyle{plain}
\newtheorem{thm}{\protect\theoremname}
\theoremstyle{plain}
\newtheorem{prop}[thm]{\protect\propositionname}
\theoremstyle{definition}
\newtheorem{example}[thm]{\protect\examplename}
\theoremstyle{plain}
\newtheorem{cor}[thm]{\protect\corollaryname}
\providecommand{\corollaryname}{Corollary}
\providecommand{\examplename}{Example}
\providecommand{\propositionname}{Proposition}
\providecommand{\theoremname}{Theorem}
\begin{document}
\title{Embedding of topological posets in hyperspaces}
\thanks{We thank professors Gerhard Gierz, Jinlu Li and Nik Weaver for their
insightful comments on an earlier draft of this paper.}
\thanks{\emph{1991 Mathematics Subject Classification}. Primary 06A06,
22A26, 54B20; Secondary 06F15, 06F20, 54E35, 54D45.}
\keywords{topological poset, hyperspace, Fell topology, topological
semilattice, topological po-group, topological order-embedding, radially
convex metric, complete semilattice homomorphism}
\author{Gerald Beer}
\address{Department of Mathematics, California State University Los Angeles}
\email{gbeer@cslanet.calstatela.edu}
\author{Efe A. Ok}
\address{Department of Economics and Courant Institute of Mathematical
Sciences}
\email{efe.ok@nyu.edu}

\begin{abstract}
We study the problem of topologically order-embedding a given topological
poset $(X,\preceq)$ in the space of all closed subsets of $X$ which is
topologized by the Fell topology and ordered by set inclusion. We show that
this can be achieved whenever $(X,\preceq)$ is a topological semilattice
(resp. lattice) or a topological po-group, and $X$ is locally compact and
order-connected (resp. connected). We give limiting examples to show that
these results are tight, and provide several applications of them. In
particular, a locally compact version of the Urysohn-Carruth metrization
theorem is obtained, a new fixed point theorem of Tarski-Kantorovich type is
proved, and it is found that every locally compact and connected Hausdorff
topological lattice is a completely regular ordered space.
\end{abstract}

\maketitle

\section{Introduction}

Let $(X,\preceq)$ be a poset, which is to say $X$ is a nonempty set and $%
\preceq$ is a partial order on $X.$ We say that $(X,\preceq)$ is a \textit{%
topological poset} if, in addition, $X$ is a topological space and $\preceq$
is a closed subset of $X\times X$.\footnote{%
The literature is not unified in this terminology. Some authors refer to
what we call a topological poset here as a \textit{po-space} or an \textit{%
ordered topological space}. In addition, a partial order $\preceq$ on a
space $X$ that is closed in $X\times X$ is sometimes called\textit{\ closed}%
, sometimes \textit{continuous}, and sometimes \textit{closed-continuous}.}
Since the seminal work of Nachbin \cite{Na}, such structures, in which
topology and order are intimately linked, have been studied extensively. In
addition, they play major roles in a diverse set of applied fields such as
general relativity \cite{M-P,Ming}, rational decision-making \cite{B-M,E-O},
and domain theory \cite{Dom}.

The prototypical example of a poset is $(2^{X},\subseteq)$ where $2^{X}$ is
the power set of $X$ and $\subseteq$ is the set inclusion ordering on $2^{X}$%
. This example is, in fact, universal in the sense that it contains a copy
of $(X,\preceq)$ for any partial order $\preceq$ on $X$. Indeed, there is a
natural way of order-embedding $(X,\preceq)$ into $(2^{X},\subseteq)$ by
associating to each $x\in X$ its \textit{principal ideal }$%
x^{\downarrow}:=\{z\in X:z\preceq x\}.$ Clearly, the partial order axioms
dictate that the mapping $x\mapsto x^{\downarrow}$ from $X$ into $2^{X}$ --
this is called the \textit{canonical order-embedding -- }is injective and
preserves order structures in the sense that $x\preceq y$ iff $%
x^{\downarrow}\subseteq y^{\downarrow}$. When $(X,\preceq)$ is a topological
poset, the canonical order-embedding maps points in $X$ to nonempty closed
subsets of $X.$ The natural codomain for this map is thus $(\mathcal{C}%
(X),\subseteq)$ where $\mathcal{C}(X)$ stands for the set of all closed
subsets of $X.$ In this case we wish the canonical order-embedding to also
preserve the topological structures of the involved posets. This brings up
the problem of finding a suitable topology for $\mathcal{C}(X)$ so that $%
x\mapsto x^{\downarrow}$ is a topological embedding as well.

While this problem seems to have received surprisingly little attention in
the literature, it is certainly not new. A folk theorem of topological order
theory, a version of which was put on record first by Lawson \cite{Law},
says that when $(X,\preceq)$ is a compact Hausdorff topological $\wedge$%
-semilattice, and we endow $\mathcal{C}(X)$ with the classical Vietoris
topology, then $x\mapsto x^{\downarrow}$ is indeed a topological embedding.
(This result will actually obtain as an immediate corollary of one of our
main theorems below.) However, topological posets that arise in applications
are often not compact, so this result does not apply. And the available
embedding results in the noncompact case are not really satisfactory. Misra 
\cite{Misra} extends the above folk theorem to the noncompact case by
endowing $X$ with the order topology and taking $\preceq$ to be total, which
is rather restrictive. By contrast, Choe and Park \cite{ChoePark} work with
a topological poset $(X,\preceq)$ which is a $C$-space.\footnote{%
This means that $\bigcup\nolimits _{x\in S}x^{\downarrow}$ and $%
\bigcup\nolimits _{x\in S}\{z\in X:x\preceq z\}$ are closed for every $S\in%
\mathcal{C}(X).$} Every compact topological poset is indeed a $C$-space, but
unfortunately, the latter property is usually difficult to verify. Besides, 
\cite{ChoePark} uses a coarsening of the Vietoris topology on the codomain
of the canonical order-embedding that depends on $\preceq$. In other words,
the Choe-Park embedding is not universal, in that the topology of the space
in which $(X,\preceq)$ is embedded depends on the partial order $\preceq$.
This limits the applicability of the embedding.

It is plain that a satisfactory solution to the embedding problem at hand
would use a hyperspace topology on $\mathcal{C}(X),$ which is not only
independent of $\preceq$, but is also well-behaved. The immediate candidates
in this regard are, of course, the Vietoris topology and the Hausdorff
metric topology (when $X$ is a metric space), but these topologies become
too demanding when the underlying space $X$ is not compact. For example, it
is only natural that the sequence of lines $\langle L_{n}\rangle$ in the
plane, where $L_{n}$ is defined by the equation $y=nx$, converge to the
vertical axis, but neither the Vietoris topology nor the Hausdorff metric
topology delivers this conclusion.

A weaker, and much better behaved, topology on $\mathcal{C}(X)$ is the 
\textit{Fell topology}. This topology is widely studied since the seminal
contribution of Fell \cite{Fell}, and it is found to have remarkable
properties. In Section 3, we formally define the Fell topology and discuss
its basic features, but we should note right away that this topology is the
same as the Vietoris topology when $X$ is compact and Hausdorff (as well as
the Hausdorff metric topology when $X$ is compact and metrizable). While
convergence with respect the Fell topology may initially appear somewhat
abstract, it is in fact intimately linked to the classical \textit{%
Kuratowski-Painlev� }(K-P) \textit{convergence }\cite{Ku} which is often
easier to visualize. Most importantly for our purposes, when $X$ is first
countable and Hausdorff (in particular, when it is metrizable), the two
convergence notions coincide for sequences of closed sets.\footnote{%
For a comprehensive account of Kuratowski-Painlev� convergence for nets of
closed sets and its relation to the Fell topology, see \cite{Be,KT}.} This
is important because, since the seminal work of Wijsman \cite{Wij}, it
became evident that K-P convergence is fundamental to finite-dimensional
convex analysis. As such, it has also led to the investigation of other
convergence notions and set topologies in infinite dimensions, including
Mosco convergence, Attouch-Wets convergence and the Joly-slice topology.
(See, for instance, \cite{At,Be,Lu}.)

The primary objective of the present paper is to investigate if, and when,
the canonical order-embedding from a topological poset $(X,\preceq)$ into $(%
\mathcal{C}(X),\subseteq)$, where $\mathcal{C}(X)$ is endowed with the Fell
topology, is a topological embedding. In Section 4, we observe that $%
x\mapsto x^{\downarrow}$ need not be continuous in this sense even when $X$
is a metric continuum (so the folk theorem we mentioned above is not valid
for compact topological posets). However, the situation is markedly
different in the case of Hausdorff topological $\wedge$-semilattices. Our
first main result shows that $x\mapsto x^{\downarrow}$ is continuous
whenever $(X,\preceq)$ is such a poset. Since the Fell topology reduces to
the Vietoris topology when $X$ is a compact Hausdorff space, this result
provides a proof for the said folk theorem. However, in the general context
of our continuity theorem, the use of the Fell topology is essential. In
Section 4, we show that the canonical order-embedding may fail to be
continuous if we equip $\mathcal{C}(X)$ with the Vietoris, or the Hausdorff
metric, topology, even when $(X,\preceq)$ is a locally compact metric $\wedge
$-semilattice.

Our second main result, also presented in Section 4, shows that the inverse
of the canonical order-embedding from $\mathcal{C}_{\downarrow}(X):=\{x^{%
\downarrow}:x\in X\}$ onto $X$ is continuous (with respect to the relative
Fell topology), provided that $(X,\preceq)$ is a locally compact topological
poset which has connected order-intervals. Putting these results together
yields our first embedding theorem: \textit{Every locally compact Hausdorff
topological }$\wedge$\textit{-semilattice }$(X,\preceq)$ \textit{with
connected order intervals} \textit{is topologically order-embedded in the
topological poset }$(\mathcal{C}(X),\subseteq)$ \textit{by the canonical map}
$x\mapsto x^{\downarrow}$. But every connected topological lattice has
connected order-intervals. This gives our second embedding theorem: \textit{%
Every locally compact and connected Hausdorff topological lattice }$%
(X,\preceq)$ \textit{can be topologically order-embedded in }$(\mathcal{C}%
(X),\subseteq)$. Put a bit more precisely, $x\mapsto x^{\downarrow}$ is both
a homeomorphism and an order-isomorphism from $X$ onto $\mathcal{C}%
_{\downarrow}(X)$ whenever the latter is such a lattice.

Needless to say, the main advantage of universal embedding theorems is that
they allow for a unified investigation of a variety of mathematical
structures by means of a particular model. Our embedding theorems are useful
precisely in this sense. To illustrate, we provide several applications in
Section 5. First, we use this theorem to extend the Urysohn-Carruth
metrization theorem, which says that every compact metric poset can be
remetrized by a radially convex metric, to the context of topological posets
that are covered by either of our embedding theorems. Second, we use our
positive result on the continuity of the inverse of the canonical
order-embedding to give sufficient topological conditions for a continuous $%
\wedge$-homomorphism between $\wedge$-complete topological $\wedge$%
-semilattices to preserve arbitrary infima. Third, we again use that
positive result to provide a fixed point theorem in which local compactness
and order-connectedness replace the order-homomorphism condition in the
classical Kantorovich-Tarski fixed point theorem. Fourth, we revisit
Anderson's theorem on local order-convexity of locally compact and connected
Hausdorff topological lattices, and provide a very short proof for it.
Finally, we utilize our second embedding theorem to prove that every such
topological lattice is completely regularly ordered (in the sense of
Nachbin).

While our approach in this paper is largely order-theoretic, we should note
that analogous topological order-embedding theorems can also be achieved in
the present setup by replacing the lattice structure with other
order-compatible algebraic structures. In particular, in Section 6, we show
that, where $\mathcal{C}(X)$ is endowed with the Fell topology, \textit{%
every locally compact topological partially ordered group }$(X,\cdot,\preceq)
$ \textit{with connected order intervals} \textit{is topologically
order-embedded in the topological poset }$(\mathcal{C}(X),\subseteq)$ 
\textit{by the canonical order-embedding}.

The paper concludes with the specification of a substantial open problem
that emanates from the present work.

\section{Preliminaries}

\subsection{Posets}

Let $X$ be a nonempty set. By a \textit{partial order} $\preceq$ on $X$, we
mean a reflexive, antisymmetric and transitive binary relation on $X$. As
usual $x\preceq y$ means $(x,y)\in$ $\preceq$, and $x\preceq y\preceq z$
means both $x\preceq y$ and $y\preceq z$ hold, etc.. For any nonempty $%
S\subseteq X,$ we write $x\preceq S$ to mean that $x\preceq y$ for each $%
y\in S$ (in which case we say that $x$ is a $\preceq$\textit{-lower bound}
for $S$)$.$ The expression $S\preceq x$ (which means $x$ is an $\preceq$%
\textit{-upper bound} for $S$) is similarly interpreted. We let $\bigwedge S$
stand for the greatest $\preceq$-lower bound for $S$ (if it exists), but as
usual, write $x\wedge y$ instead of $\bigwedge\{x,y\}.$ In turn, we let $%
S\wedge T:=\{x\wedge y:(x,y)\in S\times T\}$ for any nonempty $S,T\subseteq
X,$ but write $S\wedge y$ instead of $S\wedge\{y\}$ for any $y\in X.$ The
expressions $\bigvee S$, $x\vee y$, $S\vee T,$ and $S\vee y$ are defined
analogously.

As usual, the ordered pair $(X,\preceq)$ is called a \textit{poset} in which
case we call $X$ the \textit{carrier} of $\preceq$. As we have already noted
in Section 1, a \textit{principal ideal} in this poset is a set of the form $%
x^{\downarrow}:=\{z\in X:z\preceq x\}$ where $x\in X.$ Dual to this notion
is that of a \textit{principal filter }which is a set of the form $%
x^{\uparrow}:=\{z\in X:x\preceq z\}$. By an \textit{order-interval} in $%
(X,\preceq),$ we mean a set of the form $x^{\uparrow}\cap y^{\downarrow}$
for some $x,y\in X$ with $x\preceq y.$ For any $S\subseteq X,$ we define the 
$\preceq$\textit{-decreasing closure} of $S$ as $S^{\downarrow}:=\{z\in
X:z\preceq x$ for some $x\in S\}$, and define the $\preceq$\textit{%
-increasing closure} $S^{\uparrow}$ of $S$ dually. In turn, $S$ is said to
be $\preceq$\textit{-decreasing }if $S=S^{\downarrow},$ and $\preceq$\textit{%
-increasing }if $S=S^{\uparrow}$. On the other hand, $S$ is said to be $%
\preceq$\textit{-convex} if $x^{\uparrow}\cap y^{\downarrow}\subseteq S$ for
every $x,y\in S.$

By an \textit{order-embedding }from a poset $(X_{1},\preceq_{1})$ to a poset 
$(X_{2},\preceq_{2})$, we mean a map $\phi:X_{1}\rightarrow X_{2}$ such that 
$x\preceq_{1}y$ iff $\phi(x)\preceq_{2}\phi(y)$. This map is automatically
injective. If, in addition, $\phi$ is surjective, it is then called an 
\textit{order-isomorphism}.

A poset $(X,\preceq)$ is said to be an \textit{inf-semilattice}, or an $%
\wedge$\textit{-semilattice}, if $x\wedge y$ exists for every $x,y\in X,$
and a \textit{complete }$\wedge$\textit{-semilattice }if $\bigwedge S$
exists for every nonempty $S\subseteq X.$ The notions of $\vee$\textit{%
-semilattice} and\textit{\ complete }$\vee$\textit{-semilattice }are defined
dually. A (\textit{complete}) \textit{lattice} is a poset that is at once a
(complete) $\wedge$- and a (complete) $\vee$-semilattice. A \textit{chain} $C
$ in $(X,\preceq)$ is a nonempty subset of $X$ such that for any $x,y\in C$,
either $x\preceq y$ or $y\preceq x$ holds. By an \textit{antichain} $A$ in $%
(X,\preceq),$ we mean a set $A\subseteq X$ such that no two distinct members
of $A$ are comparable with respect to $\preceq$.

\subsection{Topological Posets}

We denote a topological space $(X,\tau)\ $simply as $X$ when there is no
ambiguity about the topology under consideration. The interior, closure, and
boundary of a subset $S$ of $X$ are denoted as int$(S),$ cl$(S),$ and bd$(S)$%
, respectively. We emphasize that $X\times X$ is always endowed with the
product topology throughout the exposition.

A \textit{topological poset} (resp., \textit{metric poset}) is a poset $%
(X,\preceq)$ whose carrier $X$ is a topological (resp., metric) space, and $%
\preceq$ is a closed subset of $X\times X$. The following is a basic, but
useful, characterization of this closedness axiom \cite[Proposition 1]{Na}.

\begin{prop}
\label{closed ord}Let $(X,\preceq)$ be a poset where $X$ is a topological
space. The following conditions are equivalent:

\begin{enumerate}
\item $\preceq$ is a closed subset of $X\times X$;

\item whenever $x\preceq y$ is false, there exist disjoint neighborhoods $V$
of $x$ and $U$ of $y$ such that $V$ is $\preceq$-increasing and $U$ is $%
\preceq$-decreasing.
\end{enumerate}
\end{prop}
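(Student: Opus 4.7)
My plan is to prove the two implications separately; the whole equivalence rests on one little observation, namely that transitivity of $\preceq$ forces $V_{0}^{\uparrow}$ and $U_{0}^{\downarrow}$ to be disjoint whenever $V_{0}\times U_{0}$ avoids $\preceq$. This single fact drives both directions.

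For (1)$\Rightarrow$(2), suppose $x\not\preceq y$. Since $\preceq$ is closed in $X\times X$, the pair $(x,y)$ lies in the open complement of $\preceq$, so there exist open sets $V_{0}\ni x$ and $U_{0}\ni y$ in $X$ with $(V_{0}\times U_{0})\cap\preceq\,=\emptyset$. I will then set $V:=V_{0}^{\uparrow}$ and $U:=U_{0}^{\downarrow}$. By construction $V$ is $\preceq$-increasing and $U$ is $\preceq$-decreasing, and since they contain the open sets $V_{0}$ and $U_{0}$ respectively, they are (in the standard sense) neighborhoods of $x$ and $y$. Disjointness falls out of transitivity: a point $w\in V\cap U$ would yield some $v\in V_{0}$ and $u\in U_{0}$ with $v\preceq w\preceq u$, whence $v\preceq u$, contradicting the choice of $V_{0}$ and $U_{0}$.

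For (2)$\Rightarrow$(1), I will check that the complement of $\preceq$ in $X\times X$ is open. Take any $(x,y)\notin\preceq$. Hypothesis (2) supplies disjoint neighborhoods $V\ni x$ and $U\ni y$ with $V$ increasing and $U$ decreasing. Passing to interiors, $\text{int}(V)\times\text{int}(U)$ is an open neighborhood of $(x,y)$ in the product, and it misses $\preceq$: if $(v,u)$ lay in both with $v\preceq u$, then $v\in V$ together with $V$ being increasing would force $u\in V$, contradicting $u\in U$ and $V\cap U=\emptyset$. Hence every point outside $\preceq$ has an open product neighborhood in the complement, so $\preceq$ is closed.

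The only mildly delicate point is the convention around the word ``neighborhood''. Under the standard reading --- a set containing an open set containing the point --- the forward construction goes through without any need to show that $V_{0}^{\uparrow}$ is itself open, which in general can fail in a topological poset. This is really the only bookkeeping item; beyond it the argument is essentially a one-line application of transitivity in each direction.
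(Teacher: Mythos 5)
Your argument is correct. Note that the paper itself offers no proof of this proposition --- it is quoted from Nachbin \cite{Na} --- so there is nothing internal to compare against; your two implications (taking $V:=V_{0}^{\uparrow}$, $U:=U_{0}^{\downarrow}$ and using transitivity for disjointness in one direction, and using the increasing/decreasing property to push a comparable pair across the disjointness in the other) constitute the standard proof, and your remark that one needs only the ``contains an open set containing the point'' reading of neighborhood, rather than openness of $V_{0}^{\uparrow}$ itself, is exactly the right bookkeeping point.
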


It follows readily from this observation that the topology of a topological
poset is always Hausdorff, because if $x$ and $y$ are distinct points in $X$
either $x\preceq y$ or $y\preceq x$ fails (by antisymmetry of $\preceq$). It
is also worth noting that all principal ideals and filters (hence all
order-intervals) in a topological poset are closed. The converse is false.
For instance, take any infinite set $X$ and endow it with the cofinite
topology. Then, $(X,=)$ is not a topological poset, but all of its principal
ideals and filters are closed.

An $\wedge$-semilattice $(X,\preceq)$ where $X$ is a topological space and $%
(x,y)\mapsto x\wedge y$ is a continuous map from $X\times X$ into $X$, is
said to be a \textit{topological }$\wedge$\textit{-semilattice}. If, in
addition, $(X,\preceq)$ is a lattice and $(x,y)\mapsto x\vee y$ is also a
continuous map from $X\times X$ into $X$, we say that $(X,\preceq)$ is a 
\textit{topological lattice. }By a \textit{Hausdorff topological} $\wedge$%
\textit{-semilattice }(\textit{lattice}), we simply mean a topological $%
\wedge$-semilattice (lattice) whose carrier is a Hausdorff space. And by a 
\textit{complete} \textit{Hausdorff topological }$\wedge$\textit{-semilattice%
}, we mean a Hausdorff topological $\wedge$-semilattice which happens to be
a complete $\wedge$-semilattice.

Even a topological $\wedge$-semilattice that satisfies the $T_{1}$-axiom may
not have closed principal ideals, let alone be a topological poset. For
example, if we order $\mathbb{N}$ as $\cdot\cdot\cdot\prec3\prec2\prec1,$
and endow it with the cofinite topology, we end up with a $T_{1}$
topological $\wedge$-semilattice in which no principal ideal is closed.
However, as it is well known, this anomaly does not arise in the presence of
the Hausdorff axiom. We put this on record for completeness of the
exposition.

\begin{prop}
\label{toplat}A topological $\wedge$-semilattice $(X,\preceq)$ is a
topological poset if and only if $X$ is Hausdorff.
\end{prop}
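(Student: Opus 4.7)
The plan is to prove the two directions separately, with the forward direction being essentially already on record in the exposition.

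For the ``only if'' direction, I would simply recall the remark made just after Proposition \ref{closed ord}: the Hausdorff property for a topological poset follows immediately from condition (2) of that proposition, because whenever $x\neq y$, antisymmetry forces at least one of $x\preceq y$ or $y\preceq x$ to fail, and the proposition then supplies disjoint neighborhoods of $x$ and $y$.

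For the ``if'' direction, the idea is to exploit the standard identification of the order with a fiber of a continuous map into the diagonal. Concretely, in any $\wedge$-semilattice one has
\[
x\preceq y\quad\Longleftrightarrow\quad x\wedge y=x,
\]
so the graph of $\preceq$ can be written as
\[
\preceq\; =\; \{(x,y)\in X\times X:x\wedge y=x\}=h^{-1}(\Delta_{X}),
\]
where $\Delta_{X}:=\{(z,z):z\in X\}$ is the diagonal and $h:X\times X\to X\times X$ is the map $h(x,y):=(x\wedge y,\,x)$. Since $\wedge$ is continuous by hypothesis and the second-coordinate projection is continuous, $h$ is continuous. Because $X$ is Hausdorff, $\Delta_{X}$ is closed in $X\times X$, and therefore so is $h^{-1}(\Delta_{X})=\;\preceq$. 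This shows $(X,\preceq)$ is a topological poset.

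There isn't really a hard step here; the substantive input is the algebraic identity characterizing $\preceq$ in terms of $\wedge$, and the only thing to double-check is continuity of the auxiliary map $h$, which is immediate from the $\wedge$-semilattice hypothesis. The proposition is therefore essentially a repackaging of the familiar fact that a Hausdorff space has closed diagonal.
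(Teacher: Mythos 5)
Your proof is correct. The ``only if'' direction is handled exactly as the paper intends (it declares it only needs to prove sufficiency, implicitly invoking the remark after Proposition \ref{closed ord}), and your sufficiency argument is sound: $x\preceq y$ iff $x\wedge y=x$, so $\preceq\,=h^{-1}(\Delta_X)$ for the continuous map $h(x,y)=(x\wedge y,x)$, and $\Delta_X$ is closed since $X$ is Hausdorff. The paper proves the same direction by a net argument: it takes a net $\langle(x_\lambda,y_\lambda)\rangle$ in $\preceq$ converging to $(x,y)$, notes $x_\lambda\wedge y_\lambda=x_\lambda$ converges both to $x\wedge y$ (by continuity of $\wedge$) and to $x$, and concludes $x\wedge y=x$ by uniqueness of net limits in a Hausdorff space. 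The two arguments are really the same fact in different clothing --- uniqueness of net limits is equivalent to closedness of the diagonal --- but your version is net-free and makes the role of the Hausdorff hypothesis structurally transparent, while the paper's version is more elementary in that it requires no identification of $\preceq$ as a preimage. Either is a perfectly acceptable proof.
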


\begin{proof}
We only need to prove the sufficiency. To this end, take any net $%
\langle(x_{\lambda},y_{\lambda})\rangle$ in $\preceq$ that converges to some 
$(x,y)\in X\times X$. Then, $x_{\lambda}\wedge y_{\lambda}\rightarrow
x\wedge y$ by continuity of $\wedge$. But for each $\lambda,\ $we have $%
x_{\lambda}\wedge y_{\lambda}=x_{\lambda}$, whence $x_{\lambda}\wedge
y_{\lambda}\rightarrow x$ as well. As a net can converge to at most one
limit in a Hausdorff space, we thus obtain $x\wedge y=x$, that is, $x\preceq
y$, as desired.
\end{proof}

Following Gierz et al. \cite[Definition VI-5.13]{Dom}, we say that a
topological poset $(X,\preceq)$ is \textit{order-connected} if the order
interval $x^{\uparrow}\cap y^{\downarrow}$ is a connected subset of $X$ for
every $x,y\in X$ with $x\preceq y.$ This property will play an essential
role in what follows. The following observation relates it to the property
of connectedness of principal ideals and filters.

\begin{prop}
Let $(X,\preceq)$ be a topological poset. If this poset is order-connected,
then its principal ideals and filters are connected. If $(X,\preceq)$ is a
topological $\wedge$-semilattice, the converse holds as well.
\end{prop}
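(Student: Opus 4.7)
The plan is to prove the two directions separately. The forward direction will be a standard ``union of connected sets sharing a common point'' argument; the converse will be a continuous-retraction argument that exploits the continuity of the meet operation.

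For the forward direction, fix $x\in X$ and decompose
\[
x^{\downarrow} \;=\; \bigcup_{y\preceq x}\bigl(y^{\uparrow}\cap x^{\downarrow}\bigr).
\]
By the order-connectedness hypothesis, each order-interval appearing on the right is connected, and each of them contains the common point $x$. Therefore $x^{\downarrow}$ is a union of connected sets with nonempty common intersection, so it is connected. A completely symmetric decomposition $x^{\uparrow}=\bigcup_{x\preceq z}\bigl(x^{\uparrow}\cap z^{\downarrow}\bigr)$ handles principal filters.

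For the converse, assume $(X,\preceq)$ is a topological $\wedge$-semilattice whose principal ideals and filters are connected. Fix $x\preceq y$ and consider the map $h:x^{\uparrow}\to X$ defined by $h(z):=z\wedge y$. Continuity of $\wedge$ makes $h$ continuous. I will verify that $h$ sends the connected set $x^{\uparrow}$ \emph{onto} the order-interval $x^{\uparrow}\cap y^{\downarrow}$, which yields the desired connectedness as a continuous image of a connected set. The inclusion $h(x^{\uparrow})\subseteq x^{\uparrow}\cap y^{\downarrow}$ follows from $z\wedge y\preceq y$ together with the observation that $x\preceq z$ and $x\preceq y$ together force $x\preceq z\wedge y$. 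The reverse inclusion is immediate: every $w$ in the order-interval satisfies $w\preceq y$, so $w=w\wedge y=h(w)$, exhibiting $w$ as a value of $h$.

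The only nontrivial step is spotting the retraction $z\mapsto z\wedge y$ in the converse; once this is isolated the verification is routine. Note that the meet operation is indispensable for the second half, as it is exactly what allows us to transport connectedness from the principal filter $x^{\uparrow}$ to the order-interval, whereas the forward direction is purely order-theoretic and uses no algebraic structure beyond the partial order.
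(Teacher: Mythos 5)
Your proof is correct and follows essentially the same route as the paper: the same decomposition of $x^{\downarrow}$ into order-intervals sharing the point $x$ for the forward direction, and for the converse the same identification $x^{\uparrow}\cap y^{\downarrow}=x^{\uparrow}\wedge y$ as the continuous image of the connected principal filter under $z\mapsto z\wedge y$. Your write-up just spells out the two inclusions that the paper leaves implicit.
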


\begin{proof}
Assume $(X,\preceq)$ is order-connected, and take any $x\in X.$ Then, $%
x^{\downarrow}=\bigcup\{z^{\uparrow}\cap x^{\downarrow}:z\in
x^{\downarrow}\},$ so $x^{\downarrow}$ is connected, being the union of a
collection of connected sets with a point in common, and similarly for $%
x^{\uparrow}$. Conversely, if $(X,\preceq)$ is a topological $\wedge$%
-semilattice with connected principal filters, it must be order-connected.
This is because, in that case, for any $x,y\in X$ with $x\preceq y,$ we have 
$x^{\uparrow}\cap y^{\downarrow}=x^{\uparrow}\wedge y,$ whence $%
x^{\uparrow}\cap y^{\downarrow}$ is connected, being the continuous image of
the connected set $x^{\uparrow}$.
\end{proof}

The second assertion of this proposition is false for topological posets in
general. For example, let $X$ stand for the boundary of the 2-cell $%
[-1,1]^{2},$ and endow $X$ with the usual topology and the coordinatewise
order. This yields a topological poset with connected principal ideals and
filters, which is not order-connected (as $x^{\uparrow}\cap
y^{\downarrow}=\{x,y\}$ where $x=(0,-1)$ and $y=(0,1)$).

By a \textit{topological order-embedding} from a topological poset $%
(X_{1},\preceq_{1})$ into another topological poset $(X_{2},\preceq_{2})$,
we mean an order embedding $\phi:X_{1}\rightarrow X_{2}$ such that $\phi$ is
a topological embedding (that is, it is a homeomorphism from $X_{1}$ onto $%
\phi(X_{1})$ where the latter is endowed with the subspace topology).

\section{A Crash Course on the Fell Topology}

In this section we provide a quick review of a few essentials of hyperspace
theory with an emphasis on the Fell topology. On the one hand, this makes
the exposition largely self-contained, as most hyperspace concepts that we
utilize subsequently are introduced here. On the other hand, this discussion
aims to underscore the advantages of our embedding theorems that stem from
the remarkable character of the Fell topology. Save for one exception
(Proposition 3.1), all of the results mentioned below can be found in the
monographs \cite{Be} and \cite{KT}.

Let $X$ be a Hausdorff space. In what follows, we denote the collection of
all closed subsets of $X$ by $\mathcal{C}(X)$, and that of all nonempty
closed subsets of $X$ by $\mathcal{C}_{0}(X)$. Formally speaking, by a 
\textit{hyperspace, }we mean any nonempty subfamily of $\mathcal{C}(X)$
equipped with some topology.

There are a variety of interesting ways in which one may topologize $%
\mathcal{C}(X)$ in a manner faithful to the topology of $X$ (in the sense of
rendering $x\mapsto\{x\}$ an embedding). The most well-known of these is the 
\textit{Hausdorff metric topology} which requires $X$ to be a metric space.
This topology is induced by the extended real-valued \textit{Hausdorff
metric }$H$ on $\mathcal{C}(X),$ which is defined for distinct $A$ and $B$
as 
\begin{equation*}
H(A,B):=\max\left\{ \sup_{a\in A}d(a,B),\sup_{b\in B}d(b,A)\right\} . 
\end{equation*}
Here, of course, $d(a,B):=\inf_{b\in B}d(a,b)$ if $B\neq\varnothing,$ and $%
d(a,\varnothing):=\infty$, and similarly for $d(b,A),$ where $d$ is the
metric of the space $X$.

There is a natural way of extending the Hausdorff metric topology to uniform
spaces. (See \cite{Mi} and \cite[p. 250]{Wi}.) Let $\mathscr{B}$ be a base
for a diagonal uniformity $\mathscr{D}$ on $X$ consisting of symmetric sets,
where $X$ is equipped with the (completely regular) topology induced by the
uniformity. For any $S\subseteq X$ and $B\in\mathscr{B}$, put 
\begin{equation*}
B(S):=\{x\in X:(x,s)\in B\text{ for some }s\in S\}. 
\end{equation*}
The $\mathscr{D}$-\textit{Hausdorff uniformity} on $\mathcal{C}(X)$ has as a
base all sets of the form 
\begin{equation*}
U_{B}:=\{(F_{1},F_{2})\in\mathcal{C}(X)\times\mathcal{C}(X):F_{2}\subseteq
B(F_{1})\ \text{and}\ F_{1}\subseteq B(F_{2})\} 
\end{equation*}
where $B$ runs through $\mathscr{B}$. The topology $\tau_{H}$ on $\mathcal{C}%
(X)$ induced by this separated uniformity (which is independent of the
chosen base) is called the \textit{Hausdorff uniform topology} determined by 
$\mathscr{D}$. If the uniformity of $X$ is metric, this topology becomes the
Hausdorff metric topology induced by that metric.

This approach does not extend to the case where $X$ is a space that is not
completely regular. The first serious study of a hyperspace topology, where $%
X$ is any topological space, was conducted in the seminal article of Michael 
\cite{Mi}. While Michael called his topology the \textit{finite topology},
the literature refers to it much more commonly as the \textit{Vietoris
topology}. To describe this topology, we adopt the notation 
\begin{equation*}
S^{-}:=\{A\in\mathcal{C}(X):A\cap S\neq\varnothing\}\text{\hspace{0.2in}and}%
\hspace{0.2in}S^{+}:=\{A\in\mathcal{C}(X):A\subseteq S\} 
\end{equation*}
where $S$ is any subset of $X$ \cite{Be}. The \textit{Vietoris topology} $%
\tau_{V}$ is generated by all sets of the form $O^{-}$ where $O$ is an open
subset of $X,$ and of the form $(X\backslash F)^{+}$ where $F\in\mathcal{C}%
(X)$. When $X$ is regular as well as Hausdorff, $\mathcal{C}(X)$ equipped
with $\tau_{V}$ is a Hausdorff space. In addition, for both the Hausdorff
uniform topology and the Vietoris topology, $\varnothing$ is an isolated
point of $\mathcal{C}(X)$.

A weaker topology on $\mathcal{C}(X)$ is the \textit{Fell topology}, which
we denote by $\tau_{F}$. This topology is generated by all sets of the form $%
O^{-}$ where $O$ is an open set in $X$, and of the form $(X\backslash K)^{+}$
where $K$ is a compact set in $X$. When $X$ is compact, the Fell topology,
the Vietoris topology and the Hausdorff uniform topology all agree. As will
be clarified shortly, the Fell topology is also intimately related to the 
\textit{Kuratowski-Painlev�} (K-P) \textit{convergence} notion for
sequences of sets.

We recall that sequence $\langle A_{n}\rangle$ in $\mathcal{C}(X)$ is said
to \textit{K-P converge} to $A\in\mathcal{C}(X)$ -- in this case we write $%
A_{n}\overset{\text{K-P}}{\rightarrow}A$ -- if (i) for each $a\in A$ there
exists a sequence $\langle a_{n}\rangle$ such that $a_{n}\rightarrow a$ and $%
a_{n}\in A_{n}$ for each $n$; and (ii) for every strictly increasing
sequence $\langle n_{k}\rangle$ of positive integers and any convergent
sequence $\langle a_{n_{k}}\rangle$ with $a_{n_{k}}\in A_{n_{k}}$ for each $k
$, we have $\lim a_{n_{k}}\in A$ \cite{At,Be,Bg,Ku}.

In the sequel, for any Hausdorff space $X,$ we write $\mathcal{C}^{\text{F}%
}(X)$ for the topological space $(\mathcal{C}(X),\tau_{F}),$ and $\mathcal{C}%
_{0}^{\text{F}}(X)$ for the subspace $\mathcal{C}_{0}(X)$ of $\mathcal{C}^{%
\text{F}}(X).$ (In other words, the topology of $\mathcal{C}_{0}^{\text{F}%
}(X)$ is the relative Fell topology.) The following is a list of notable
facts about these spaces whose proofs can be found in \cite{At,Be,KT}.

\begin{itemize}
\item $\mathcal{C}^{\text{F}}(X)$ is compact;

\item $\mathcal{C}_{0}^{\text{F}}(X)=X^{-}$ is an open subset in $\mathcal{C}%
^{\text{F}}(X)$;

\item $x\mapsto\{x\}$ embeds $X$ in $\mathcal{C}^{\text{F}}(X)$ as a closed
subset, from which many important properties that hold in $\mathcal{C}^{%
\text{F}}(X)$ are forced on the base space;

\item When $X$ is first countable, convergence of sequences in $\mathcal{C}^{%
\text{F}}(X)$ is equivalent to their K-P convergence;

\item When $X$ is a uniform space, the Fell topology is coarser than the
Hausdorff uniform topology on $\mathcal{C}(X)$;

\item $\mathcal{C}^{\text{F}}(X)$ is Hausdorff iff $X$ is locally compact;
in this setting, as an open subset of a compact Hausdorff space, $\mathcal{C}%
_{0}^{\text{F}}(X)$ is a locally compact Hausdorff space;

\item If $X$ is locally compact and has a countable base $\mathcal{V}$ for
its topology, then a countable subbase for the Fell topology is $\{V^{-}:V\in%
\mathcal{V}\}\cup\{(X\backslash\text{cl}(V))^{+}:V\in\mathcal{V}\ \text{with}%
\ V\ \text{relatively compact}\}$. In this setting, by the Urysohn
metrization theorem, $\mathcal{C}^{\text{F}}(X)$ is metrizable.

\item If $\left\langle S_{\lambda}\right\rangle $ is a decreasing (resp.,
increasing) net with respect to $\subseteq$, then $S_{\lambda}\rightarrow%
\bigcap S_{\lambda}$ (resp., $S_{\lambda}\rightarrow$ cl$(\bigcup
S_{\lambda})$) relative to $\tau_{F}$.
\end{itemize}

We will always order the members of $\mathcal{C}(X)$ by the set inclusion
ordering. Thus, henceforth, whenever we refer to $\mathcal{C}^{\text{F}}(X)$%
, or $\mathcal{C}_{0}^{\text{F}}(X),$ as a poset, what we mean is $(\mathcal{%
C}^{\text{F}}(X),\subseteq)$, or $(\mathcal{C}_{0}^{\text{F}}(X),\subseteq),$
as the case may be. The next result characterizes exactly when these posets
are topological.

\begin{prop}
\label{whenposet} Let $X$ be a Hausdorff space. The following conditions are
equivalent:

\begin{enumerate}
\item $X$ is locally compact;

\item $\subseteq$ is closed in $\mathcal{C}(X)\times\mathcal{C}(X)$ in the
Fell topology;

\item $\subseteq$ is closed in $\mathcal{C}_{0}(X)\times\mathcal{C}_{0}(X)$
in the relative Fell topology.
\end{enumerate}
\end{prop}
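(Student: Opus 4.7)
The plan is to prove the cyclic chain $(1)\Rightarrow(2)\Rightarrow(3)\Rightarrow(1)$. The forward direction $(1)\Rightarrow(2)$ is a direct construction using sub-basic Fell opens, the middle step $(2)\Rightarrow(3)$ is automatic from the subspace topology, and $(3)\Rightarrow(1)$ requires a delicate net construction that forms the heart of the argument.

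For $(1)\Rightarrow(2)$, I would verify that the complement of $\subseteq$ is Fell-open in $\mathcal{C}^{\text{F}}(X)\times\mathcal{C}^{\text{F}}(X)$. Fix $(A,B)$ with $A\not\subseteq B$, pick $x\in A\setminus B$, and use local compactness together with Hausdorffness to produce a compact neighborhood $K$ of $x$ with $K\subseteq X\setminus B$. Let $V=\mathrm{int}(K)$. Then $(A,B)\in V^{-}\times(X\setminus K)^{+}$, and any pair $(A',B')$ in this sub-basic Fell-open product contains a point $z\in A'\cap V\subseteq K$ with $z\notin B'$, forcing $A'\not\subseteq B'$. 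The implication $(2)\Rightarrow(3)$ is immediate, since $\mathcal{C}_{0}^{\text{F}}(X)\times\mathcal{C}_{0}^{\text{F}}(X)$ carries the subspace topology of $\mathcal{C}^{\text{F}}(X)\times\mathcal{C}^{\text{F}}(X)$.

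For $(3)\Rightarrow(1)$, I would argue by contrapositive. Suppose $X$ is not locally compact at some $x$. Since a one-point Hausdorff space is locally compact, there exists $y\neq x$, and by Hausdorffness some open neighborhood $V_{0}$ of $x$ with $y\notin V_{0}$. Direct the pairs $\lambda=(V,K)$, where $V$ ranges over open neighborhoods of $x$ contained in $V_{0}$ and $K$ over compact subsets of $X$, by reverse inclusion on $V$ and forward inclusion on $K$. Non-local-compactness at $x$ gives $V\not\subseteq K$ for every such pair (else $K$ would be a compact neighborhood of $x$), so pick $y_{\lambda}\in V\setminus K$; note $y_{\lambda}\neq y$ because $y\notin V_{0}\supseteq V$. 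By construction $y_{\lambda}\to x$ in $X$ while $y_{\lambda}$ eventually leaves every prescribed compact set. Setting $A_{\lambda}=B_{\lambda}:=\{y,y_{\lambda}\}$ gives nonempty closed two-point sets with $A_{\lambda}\subseteq B_{\lambda}$ trivially, and a direct check of the two Fell criteria (every open set meeting the candidate limit is eventually hit; every compact set disjoint from it is eventually missed) shows that this single net converges in $\mathcal{C}_{0}^{\text{F}}(X)$ both to $\{x,y\}$ and to $\{y\}$. Hence $(A_{\lambda},B_{\lambda})\to(\{x,y\},\{y\})$ in the product topology on $\mathcal{C}_{0}^{\text{F}}(X)\times\mathcal{C}_{0}^{\text{F}}(X)$, while $\{x,y\}\not\subseteq\{y\}$, contradicting $(3)$.

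The genuine obstacle is producing this counterexample in $(3)\Rightarrow(1)$. One cannot merely invoke the Hausdorff-iff-locally-compact criterion for $\mathcal{C}^{\text{F}}(X)$ after restricting to $\mathcal{C}_{0}(X)$: when $X$ fails to be locally compact, the resulting non-Hausdorffness of $\mathcal{C}^{\text{F}}(X)$ is typically witnessed only by nets whose ambiguous Fell limit is $\varnothing$, and $\mathcal{C}_{0}^{\text{F}}(X)$ itself may well remain Hausdorff. The anchoring device of replacing a single escaping point $y_{\lambda}$ with the pair $\{y,y_{\lambda}\}$ is precisely what keeps both the net and its two candidate Fell limits inside $\mathcal{C}_{0}(X)$, thereby converting a non-uniqueness-of-limit phenomenon into a genuine failure of closedness of $\subseteq$.
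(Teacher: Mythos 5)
Your proof is correct, and its interesting part --- $(3)\Rightarrow(1)$ --- takes a genuinely different route from the paper. Your $(1)\Rightarrow(2)$ is the paper's argument in open-set rather than net form: given a point of $A\setminus B$, a compact neighborhood $K$ of it missing $B$ puts $(A,B)$ in the sub-basic open set $\mathrm{int}(K)^{-}\times(X\backslash K)^{+}$, which misses $\subseteq$; the paper says the same thing by showing $\langle A_{\lambda}\rangle$ and $\langle B_{\lambda}\rangle$ eventually land in $\mathrm{int}(K)^{-}$ and $(X\backslash K)^{+}$ respectively. For $(3)\Rightarrow(1)$, however, the paper does not construct anything: it observes that closedness of $\subseteq$ forces $\mathcal{C}_{0}^{\text{F}}(X)$ to be Hausdorff (antisymmetry plus Proposition \ref{closed ord}) and then cites Beer's Proposition 5.1.2 to conclude that $X$ is locally compact. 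You instead exhibit the failure by hand: a net $y_{\lambda}\to x$ that escapes every compact set, anchored by a fixed second point $y$, so that $\{y,y_{\lambda}\}$ Fell-converges simultaneously to $\{x,y\}$ and to $\{y\}$ while staying in $\mathcal{C}_{0}(X)$; taking $A_{\lambda}=B_{\lambda}$ then breaks closedness of $\subseteq$. All the verifications check out --- in particular the compact-miss condition for the limit $\{y\}$ is exactly where non-local-compactness enters --- and the payoff is a self-contained argument with no external citation. One quibble with your closing commentary: your own net $\{y,y_{\lambda}\}$ has two distinct \emph{nonempty} Fell limits, so $\mathcal{C}_{0}^{\text{F}}(X)$ cannot in fact remain Hausdorff when $X$ fails to be locally compact; the worry that the Hausdorff criterion is only witnessed at $\varnothing$ is therefore unfounded, the paper's citation is legitimate, and your anchoring device is in effect a proof of that stronger statement rather than a way around its failure.
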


\begin{proof}
(2) implies (3) trivially. On the other hand, if (3) holds, then $\mathcal{C}%
_{0}(X)$ is Hausdorff in the relative Fell topology (Proposition 2.3), so $X$
must be locally compact \cite[Proposition 5.1.2]{Be}. To complete the proof,
assume that $X$ is locally compact, and take any two nets $\langle
A_{\lambda}\rangle$ and $\langle B_{\lambda}\rangle$ in $\mathcal{C}(X)$
with $A_{\lambda}\subseteq B_{\lambda}$ for each index $\lambda$. Suppose
these nets converge to $A$ and $B$, respectively, relative to the Fell
topology. To derive a contradiction, say there is a point $a$ in $%
A\backslash B.$ Since $B$ is closed and $X$ is locally compact, there is a
compact neighborhood $K$ of $a$ in $X$ such that $K\cap B=\varnothing$. But
this means that $B\in(X\backslash K)^{+}$, so $\langle B_{\lambda}\rangle$
must eventually be in $(X\backslash K)^{+}$. As $\langle A_{\lambda}\rangle$
lies in $\text{int}(K)^{-}$ eventually, it follows that $A_{\lambda}%
\subseteq B_{\lambda}$ fails for some $\lambda$, a contradiction. Thus, $%
A\subseteq B,$ and we are done.
\end{proof}

The reader is invited to check that $(\mathcal{C}(X),\subseteq)$ equipped
with the Vietoris topology is a topological poset, provided that $X$ is
regular. On the other hand, when $X$ is a uniform space, endowing $(\mathcal{%
C}(X),\subseteq)$ with the Hausdorff uniform topology always yields a
topological poset.

\section{Bicontinuity of the Canonical Order-Embedding}

\subsection{Continuity of $x\mapsto x^{\downarrow}$}

We begin with giving a simple, if a bit surprising, example that shows that
the canonical order-embedding from a topological poset $(X,\preceq)$ into $%
\mathcal{C}^{\text{F}}(X)$ need not be continuous, even when $X$ is a metric
continuum.

\begin{example}
\label{start}Consider $X:=([-1,0]\times\{0\})\cup(\{0\}\times\lbrack-1,0])$
as a metric subspace of the Euclidean plane; $X$ is then a compact and
connected metric space. Moreover, $(X,\preceq)$ is a metric poset, where $%
\preceq$ is the coordinatewise order on $X.$ Now notice that $(-\frac{1}{n}%
,0)\rightarrow(0,0)$ and $(0,-1)\in(0,0)^{\downarrow},$ but $(-\frac{1}{n}%
,0)^{\downarrow}$ fails to hit the open ball of radius $\frac{1}{2}$ around $%
(0,-1)$ in $X$ for each $n$. It follows that the canonical order-embedding $%
x\mapsto x^{\downarrow}$ is not continuous from $X$ into $\mathcal{C}^{\text{%
F}}(X)$.\footnote{%
This does not mean that $(X,\preceq)$ cannot be topologically order-embedded
in $\mathcal{C}^{\text{F}}(X)$ in this particular setting. Indeed, $%
(a,b)\mapsto(a,b)^{\downarrow}\cup(b,a)^{\downarrow}$ embeds $(X,\preceq)$
in $\mathcal{C}^{\text{F}}(X)$ topologically as well as order-theoretically.}
\end{example}

If we adjoint $(-1,-1)$ to $X$ in this example, we obtain a lattice which is
in fact a compact Hausdorff topological $\vee$-semilattice, and yet
continuity of $x\mapsto x^{\downarrow}$ still fails. However, somewhat
surprisingly, this anomaly does not arise if the underlying poset is a
Hausdorff topological $\wedge$-semilattice. This is the content of our next
finding.

\begin{thm}
\label{cont}Let $(X,\preceq)$ be a Hausdorff topological $\wedge$%
-semilattice. Then the map $x\mapsto x^{\downarrow}$ from $X$ into $\mathcal{%
C}^{\text{F}}(X)$ is continuous.
\end{thm}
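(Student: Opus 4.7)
To prove Theorem \ref{cont}, the plan is to exploit the subbase for the Fell topology: $\tau_F$ is generated by the subbase $\{O^-:O\text{ open in }X\}\cup\{(X\setminus K)^+:K\text{ compact in }X\}$. So to show continuity of $\phi:x\mapsto x^{\downarrow}$ at an arbitrary point $x\in X$, it suffices to verify, for every net $x_\lambda\to x$ in $X$, that

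\textbf{(a)} if $x^{\downarrow}\in O^-$ (i.e.\ $x^{\downarrow}\cap O\neq\varnothing$) for an open $O\subseteq X$, then eventually $x_\lambda^{\downarrow}\cap O\neq\varnothing$; and

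\textbf{(b)} if $x^{\downarrow}\in(X\setminus K)^+$ (i.e.\ $x^{\downarrow}\cap K=\varnothing$) for a compact $K\subseteq X$, then eventually $x_\lambda^{\downarrow}\cap K=\varnothing$.

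For (a), the key trick is to use the $\wedge$-operation to produce witnesses in $x_\lambda^{\downarrow}$ that stay close to a witness in $x^{\downarrow}$. Pick $z\in x^{\downarrow}\cap O$, so $z\preceq x$ and hence $z\wedge x=z$. By joint continuity of $\wedge$, the net $z\wedge x_\lambda$ converges to $z\wedge x=z\in O$, so eventually $z\wedge x_\lambda\in O$; since $z\wedge x_\lambda\preceq x_\lambda$, this witness lies in $x_\lambda^{\downarrow}\cap O$, giving (a).

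For (b), I would argue by contradiction using compactness of $K$ together with the fact that $\preceq$ is a closed subset of $X\times X$ (which is available because, by Proposition \ref{toplat}, a Hausdorff topological $\wedge$-semilattice is automatically a topological poset). If (b) failed, there would be a subnet $\langle x_\mu\rangle$ and points $y_\mu\in x_\mu^{\downarrow}\cap K$; by compactness of $K$, a further subnet of $\langle y_\mu\rangle$ converges to some $y\in K$. Along this subnet $(y_\mu,x_\mu)\to(y,x)$ with $(y_\mu,x_\mu)\in\,\preceq$, so closedness of $\preceq$ forces $y\preceq x$, i.e.\ $y\in x^{\downarrow}\cap K$, contradicting the hypothesis of (b).

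I do not anticipate a real obstacle here. The only nontrivial insight is the use of $z\wedge x_\lambda$ in step (a), which is precisely what fails in Example \ref{start} (where no $\wedge$-operation is available) and what the paper emphasizes as the reason the $\wedge$-semilattice assumption matters. Step (b) is standard net-compactness plus closedness of the order, and uses nothing more than the Hausdorff topological poset structure.
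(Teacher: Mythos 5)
Your proposal is correct and follows essentially the same route as the paper's proof: part (a) uses the witness $z\wedge x_\lambda$ (the paper phrases this with neighborhoods $U,V$ satisfying $U\wedge V\subseteq O$ rather than nets, but the idea is identical), and part (b) is the same contradiction argument via a cluster point in $K$ and closedness of $\preceq$ supplied by Proposition \ref{toplat}.
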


\begin{proof}
By Proposition \ref{toplat}, $X$ is a topological poset, and hence the range
of the order-embedding $x\mapsto x^{\downarrow}$ is indeed contained in $%
\mathcal{C}(X)$. Now fix an arbitrary $x\in X.$ Take any open set $O$ in $X$
with $x^{\downarrow}\in O^{-}$, that is, $x^{\downarrow}\cap O\neq\varnothing
$. We pick any $z$ in this intersection, and note that $x\wedge z=z$ and $O$
is an open neighborhood of $z$ in $X.$ By continuity of $\wedge,$ therefore,
there exist open subsets $U$ and $V$ of $X$ such that $(x,z)\in U\times V$
and $U\wedge V\subseteq O.$ It follows that $y\wedge z\in y^{\downarrow}\cap
O$ for every $y\in U.$ Thus, for each $y\in U,$ $y^{\downarrow}\cap
O\neq\varnothing$, that is, $y^{\downarrow}\in O^{-}$, which means $%
\{y^{\downarrow}:y\in U\}\subseteq O^{-}$.

Next, take any compact subset $K$ of $X$ with $x^{\downarrow}\in(X\backslash
K)^{+}$, that is, $x^{\downarrow}\subseteq X\backslash K$. We wish to find
an open neighborhood $V$ of $x$ in $X$ such that $\{y^{\downarrow}:y\in
V\}\subseteq(X\backslash K)^{+}$. To derive a contradiction, suppose there
is no such $V,$ and let $\mathcal{V}(x)$ stand for the family of all open
neighborhoods of $x$ in $X$. Then, for every $V\in\mathcal{V}(x),$ there is
an $x_{V}\in V$ such that $x_{V}^{\downarrow}$ is not contained within $%
X\backslash K,$ that is, $x_{V}^{\downarrow}\cap K\neq\varnothing$. We
choose any $y_{V}$ in $x_{V}^{\downarrow}\cap K$ for each $V\in\mathcal{V}(x)
$ to form a net $\langle y_{V}\rangle$ in $X$ with the underlying directed
set $(\mathcal{V}(x),\supseteq).$ By compactness of $K,$ this net has a
cluster point, say $y,$ in $K.$ It follows that $(y,x)$ belongs to $\text{cl}%
(\{(y_{V},x_{V}):V\in\mathcal{V}(x)\})$. As $\preceq$ is closed in $X\times
X,$ and $y_{V}\preceq x_{V}$ for each $V\in\mathcal{V}(x),$ we thus conclude
that $y\preceq x.$ But then $y$ belongs to $x^{\downarrow}\cap K,$
contradicting $x^{\downarrow}\subseteq X\backslash K$.
\end{proof}

When $X$ is a compact Hausdorff space, the Vietoris and Fell topologies on $%
\mathcal{C}(X)$ coincide. Since a continuous injection from a compact space
into a Hausdorff space is a topological embedding, the following thus
obtains as an immediate consequence of Theorem \ref{cont}. As we have noted
in Section 1, this is a folk theorem of topological order theory.

\begin{cor}
The canonical order-embedding from a compact Hausdorff topological $\wedge$%
-semilattice $(X,\preceq)$ into $(\mathcal{C}(X),\subseteq)$ is a
topological embedding, where $\mathcal{C}(X)$ is endowed with the Vietoris
topology.
\end{cor}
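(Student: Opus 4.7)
The plan is to read the corollary as an immediate reduction to Theorem \ref{cont}, relying on two standard facts about compact spaces. First, since $X$ is compact Hausdorff, the text has already observed that the Fell topology and the Vietoris topology on $\mathcal{C}(X)$ coincide, and that in this regime $(\mathcal{C}(X),\tau_V) = \mathcal{C}^{\text{F}}(X)$ is Hausdorff (as the Fell topology is Hausdorff on a locally compact Hausdorff base space, and compact implies locally compact). So Theorem \ref{cont} yields that $x\mapsto x^{\downarrow}$ is a continuous map from the compact space $X$ into the Hausdorff space $(\mathcal{C}(X),\tau_V)$.

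Next, I would note that $x\mapsto x^{\downarrow}$ is automatically an order-embedding from $(X,\preceq)$ into $(\mathcal{C}(X),\subseteq)$, by the partial order axioms (this was recorded in the introduction: $x\preceq y$ iff $x^{\downarrow}\subseteq y^{\downarrow}$, using antisymmetry to get injectivity). Thus the only remaining ingredient is the topological embedding property, namely that the inverse of $x\mapsto x^{\downarrow}$, viewed as a map from $\mathcal{C}_{\downarrow}(X)$ onto $X$, is continuous. For this I would invoke the standard fact that any continuous injection from a compact space into a Hausdorff space is a closed map, hence a topological embedding onto its image; applied to our continuous injection $x\mapsto x^{\downarrow}:X\to(\mathcal{C}(X),\tau_V)$, this delivers exactly what is needed.

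Combining the two yields that $x\mapsto x^{\downarrow}$ is simultaneously an order-embedding and a homeomorphism onto its image, i.e.\ a topological order-embedding, as required. There is essentially no obstacle here: the compactness of $X$ does all the work in both the topology-coincidence step and the automatic-embedding step, so the corollary is a genuinely immediate consequence of Theorem \ref{cont}.
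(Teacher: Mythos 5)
Your proposal is correct and follows exactly the paper's own argument: the coincidence of the Fell and Vietoris topologies on $\mathcal{C}(X)$ for compact Hausdorff $X$, continuity from Theorem \ref{cont}, and the standard fact that a continuous injection from a compact space into a Hausdorff space is a topological embedding. Nothing further is needed.
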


It is natural to inquire if one can replace the Fell topology with the
Vietoris topology in Theorem \ref{cont}, or at least with the Hausdorff
metric topology when $(X,\preceq)$ is a metric $\wedge$-semilattice. We
conclude this section with two examples that show that the answers to these
questions are negative. In each case, we use a metric $\wedge$-semilattice $%
(X,\preceq)$ where $X\subseteq\mathbb{R}^{2},$ the metric is the Euclidean
metric and $\preceq$ is simply the coordinatewise ordering. In the case of
the Vietoris topology, we can even take $X=\mathbb{R}^{2}$.

\begin{example}
In $\mathbb{R}^{2},$ we have $(\frac{1}{n},0)\rightarrow(0,0),$ but $%
\left\langle (\frac{1}{n},0)^{\downarrow}\right\rangle $ does not converge
to $(0,0)^{\downarrow}$ relative to the Vietoris topology. To see this, put $%
F:=\{(\alpha,\beta)\in\mathbb{R}^{2}:\beta\leq\ln\alpha\},$ and notice that $%
(0,0)^{\downarrow}\in(X\backslash F)^{+}$ while $(\frac{1}{n},0)^{\downarrow}
$ does not belong to $(X\backslash F)^{+}$ for any $n\in\mathbb{N}$.%
\footnote{%
By contrast, $(1,\tfrac{1}{n})^{\downarrow}\rightarrow(1,0)^{\downarrow}$ in
the Fell topology. Indeed, as $\langle(1,\tfrac{1}{n})^{\downarrow}\rangle$
is a decreasing sequence in $\mathcal{C}(X),$ we have $(1,\tfrac{1}{n}%
)^{\downarrow}\overset{\text{K-P}}{\rightarrow}\bigcap_{n\geq1}(1,\tfrac{1}{n%
})^{\downarrow}=(1,0)^{\downarrow}$ in concert with Theorem \ref{cont}.}
\end{example}

The situation with the Hausdorff metric topology is less straightforward.
For one thing, in the plane equipped with the Euclidean metric, we have $%
H(x^{\downarrow},y^{\downarrow})\leq\left\Vert x-y\right\Vert _{2}$ for any $%
x,y\in\mathbb{R}^{2}$, so we cannot work with the entire plane to produce
the desired counterexample. This is partly responsible for the intricacy of
the next construction.

\begin{example}
In $\mathbb{R}^{2}$, let $A:=\{(-1,-k):k\in\mathbb{N}\},$ let $%
B:=\{(0,-k):k\in\mathbb{N}\}$ and for each integer $m\geq2$, put $C_{m}:=\{(-%
\frac{1}{m},-k):k=1,...,m\}.$ The carrier of our topological poset is 
\begin{equation*}
X:=A\cup B\cup C_{2}\cup C_{3}\cup\cdot\cdot\cdot 
\end{equation*}
which we view as a metric subspace of $\mathbb{R}^{2}$ and endow with the
coordinatewise order $\preceq$. Clearly, $(X,\preceq)$ is a complete,
locally compact and totally disconnected metric space whose only
non-isolated points lie in $B.$ Relative to $\preceq,$ we have $H((-\frac{1}{%
n},-1)^{\downarrow},(0,-1)^{\downarrow})=1$ for every $n\geq2,$ so the
canonical order-embedding would not be continuous if we endowed $\mathcal{C}%
(X)$ with the Hausdorff metric. Besides, it is easily seen that $(X,\preceq)$
is a lattice. In what follows, we prove that $(X,\preceq)$ is actually a
metric $\wedge$-semilattice.

To prove that $\wedge$ is a continuous map from $X\times X$ onto $X,$ it is
enough to focus only on the subdomain $X\times B$, because $\wedge$ is a
symmetric map and all points in $X\backslash B$ are isolated. We may also
reason sequentially, as the carrier space is metric.
\end{example}

Take any $x\in X$ and put $y:=(0,-k)$ for some $k\in\mathbb{N}$. Let $%
(x_{n},y_{n})$ be a sequence in $X\times X$ with $x_{n}\rightarrow x$ and $%
y_{n}\rightarrow y.$ In the sequel, we will write $x_{n}:=(\alpha_{n},%
\beta_{n})$ and $y_{n}:=(\gamma_{n},\mu_{n})$ for each $n\in\mathbb{N}$.

First suppose that $x\in B$ as well, that is, $x=(0,-l)$ for some $l\in%
\mathbb{N}$. This means that $\left\langle \alpha_{n}\right\rangle $ and $%
\left\langle \gamma_{n}\right\rangle $ converge to $0$ and that, eventually, 
$\beta_{n}:=-l$ and $\mu_{n}=-k.$ In view of the continuity of the minimum
functional on $\mathbb{R}^{2},$ therefore, 
\begin{equation*}
\lim_{n\rightarrow\infty}x_{n}\wedge
y_{n}=\lim_{n\rightarrow\infty}(\min\{\alpha_{n},\gamma_{n}\},\min\{-l,-k%
\})=(0,\min\{-l,-k\})=x\wedge y. 
\end{equation*}
Next suppose that $x\in X\backslash B,$ and put $x:=(\alpha,\beta).$ In this
case, $x$ is isolated, so we may assume $x_{n}=x$ for all $n.$ As $%
\gamma_{n}\rightarrow0>\alpha$ and $\mu_{n}\rightarrow-k,$ we can also
assume that $\gamma_{n}>\alpha$ and $\mu_{n}=-k$ for all $n$. We now claim
that with these simplifying assumptions, $\left\langle x_{n}\wedge
y_{n}\right\rangle =\left\langle x\wedge y_{n}\right\rangle $ is a constant
sequence. This is verified by considering five separate cases: 
\begin{equation*}
x\wedge y_{n}=\left\{ 
\begin{array}{ll}
x, & \text{if }\alpha=-1\text{ and }\beta\leq-k, \\ 
(-1,-k), & \text{if }\alpha=-1\text{ and }\beta>-k, \\ 
(-1,-k), & \text{if }-1<\alpha<-1/k, \\ 
(\alpha,-k), & \text{if }-1/k\leq\alpha\text{ and }\beta\geq-k, \\ 
x, & \text{if }-1/k\leq\alpha\text{ and }\beta<-k%
\end{array}%
\right. 
\end{equation*}
for every $n\in\mathbb{N}$. It follows $x\wedge y_{n}=x\wedge y$ for each $n$
in every contingency, and we may thus conclude that $x_{n}\wedge
y_{n}\rightarrow x\wedge y$ when $x\in X\backslash B$ as well.

\subsection{Continuity of the Inverse of $x\mapsto x^{\downarrow}$}

We again start with some limiting examples that show that the inverse of the
canonical order-embedding need not be continuous without suitable hypotheses.

\begin{example}
Let $X:=(-\infty,0)^{2}\cup\{\mathbf{0}\}\cup(0,\infty)^{2}$ which we view
as a subspace of $\mathbb{R}^{2}$ relative to the usual topology. (Here $%
\mathbf{0}$ stands for the origin of $\mathbb{R}^{2}$.) Then, $(X,\preceq),$
where $\preceq$ is the usual coordinatewise order, is a Hausdorff
topological lattice. However, the map $x^{\downarrow}\mapsto x$ is not
continuous at $\mathbf{0}^{\downarrow}$. After all, here we have $(\frac{1}{n%
},1)^{\downarrow}\overset{\text{K-P}}{\rightarrow}\mathbf{0}^{\downarrow}$,
whence $(\frac{1}{n},1)^{\downarrow}\rightarrow\mathbf{0}^{\downarrow}$ in
the Fell topology.
\end{example}

\begin{example}
Let $\mathbf{0}$ again stand for the 2-vector $(0,0),$ and put $x_{n}:=(%
\frac{1}{n},n)$ for each $n\in\mathbb{N}$. We now consider the poset $%
(X,\preceq)$ where $X:=\{\mathbf{0},x_{1},x_{2},...\}$ and $\preceq$ is the
coordinatewise order. We endow $X$ with the discrete topology (which is the
subspace topology $X$ inherits from the Euclidean topology of the plane).
Then, $(X,\preceq)$ is a locally compact Hausdorff topological $\wedge$%
-semilattice. However, the inverse of the canonical order-embedding in this
setting is not continuous. Indeed, we have $x_{n}^{\downarrow}=\{\mathbf{0}%
,x_{n}\}\overset{\text{K-P}}{\rightarrow}\{\mathbf{0}\}$ here, so $%
x_{n}^{\downarrow}\rightarrow\mathbf{0}^{\downarrow}$ in the Fell topology,
but $\langle x_{n}\rangle$ does not converge to $\mathbf{0}$.
\end{example}

In the first example above, the difficulty emanates from the lack of local
compactness (at $\mathbf{0}$), and in the second one, because
order-intervals are not connected. The next result shows that if the
underlying topological poset $(X,\preceq)$ is locally compact and possesses
connected order-intervals, then the inverse of the associated canonical
order-embedding is indeed continuous on $\mathcal{C}_{\downarrow}(X):=\{x^{%
\downarrow}:x\in X\},$ the set of all principal ideals of $(X,\preceq)$.

\begin{thm}
\label{continv}Let $(X,\preceq)$ be a locally compact and order-connected
topological poset. Then the map $x^{\downarrow}\mapsto x$ from $\mathcal{C}%
_{\downarrow}(X)$ onto $X$ is continuous (with respect to the relative Fell
topology).
\end{thm}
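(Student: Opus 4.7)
The plan is to argue by contradiction. Suppose that $\langle x_\lambda^{\downarrow} \rangle$ is a net Fell-converging to $x^{\downarrow}$ in $\mathcal{C}_{\downarrow}(X)$ while $x_\lambda \not\to x$ in $X$. Then some open neighborhood $U$ of $x$ fails to eventually contain $x_\lambda$; passing to a subnet, $x_\lambda \notin U$ for every $\lambda$. Using local compactness, fix a compact neighborhood $K$ of $x$ with $K \subseteq U$; thus $x \in \text{int}(K)$, $\text{bd}(K)$ is compact, and each $x_\lambda \in X \setminus K$.

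The first task is to manufacture a companion net $\langle z_\lambda \rangle$ with $z_\lambda \in x_\lambda^{\downarrow}$ and $z_\lambda \to x$. Since $x \in x^{\downarrow}$, Fell convergence guarantees that for each open neighborhood $V$ of $x$ the intersection $x_\lambda^{\downarrow} \cap V$ is nonempty eventually. A standard diagonal refinement --- indexing by $\Lambda$ crossed with the neighborhood filter at $x$ directed by reverse inclusion --- produces such a subnet; it still Fell-converges to $x^{\downarrow}$ and still satisfies $x_\lambda \notin K$. Eventually $z_\lambda \in \text{int}(K)$, so the order-interval $I_\lambda := z_\lambda^{\uparrow} \cap x_\lambda^{\downarrow}$, connected by order-connectedness of $(X,\preceq)$, meets both $\text{int}(K)$ (at $z_\lambda$) and $X \setminus K$ (at $x_\lambda$). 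Hence $I_\lambda$ must hit the compact set $\text{bd}(K)$; pick $w_\lambda$ in this intersection and, by compactness of $\text{bd}(K)$, pass to a further subnet with $w_\lambda \to w \in \text{bd}(K)$.

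The contradiction now emerges from two order comparisons. On the one hand, $w_\lambda \in x_\lambda^{\downarrow}$ together with $x_\lambda^{\downarrow} \to x^{\downarrow}$ and $w_\lambda \to w$ forces $w \in x^{\downarrow}$, i.e., $w \preceq x$; this is the upper half of Fell convergence, which in the locally compact Hausdorff setting follows by taking a compact neighborhood of $w$ missing $x^{\downarrow}$ should $w \notin x^{\downarrow}$. On the other hand, $z_\lambda \preceq w_\lambda$ with $z_\lambda \to x$ and $w_\lambda \to w$, combined with closedness of $\preceq$, gives $x \preceq w$. By antisymmetry $w = x$, contradicting $\text{int}(K) \cap \text{bd}(K) = \varnothing$. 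The delicate point, and the main obstacle, is the diagonal extraction of $\langle z_\lambda \rangle$ above: for sequences in a first-countable space it is routine, but since the theorem assumes no countability hypothesis, the argument must be cast for general nets by refining along the neighborhood filter at $x$.
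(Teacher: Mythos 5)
Your proposal is correct and follows essentially the same route as the paper's proof: the same contradiction setup with a compact neighborhood $K$ excluded by a subnet, the same diagonal net indexed by $\Lambda$ crossed with the neighborhood filter at $x$ to produce points of $x_{\lambda}^{\downarrow}$ converging to $x$, the same use of order-connectedness to extract points on bd$(K)$ with a cluster point there, and the same two order comparisons (one via the $(X\backslash C)^{+}$ half of Fell convergence, one via closedness of $\preceq$) forcing the absurd equality $w=x$. The ``delicate point'' you flag is handled in the paper exactly as you describe, via the directed set $\Omega$ of pairs $(\lambda,O)$ with $x_{\lambda}^{\downarrow}\cap O\neq\varnothing$.
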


\begin{proof}
Let $\langle x_{\lambda}\rangle$ be a net in $X$ (with the underlying
directed set $(\Lambda,\trianglelefteq)$), and $x$ a point in $X$, such that 
$x_{\lambda}^{\downarrow}\rightarrow x^{\downarrow}$ in the Fell topology.
We need to show that $x_{\lambda}\rightarrow x$.

To derive a contradiction, suppose $\langle x_{\lambda}\rangle$ does not
converge to $x$ in $X.$ As $X$ is locally compact and Hausdorff, there is a
local base of compact neighborhoods of $x$ in $X$ \cite[p. 238]{Du}. By
replacing $\Lambda$ by a suitable cofinal subset if necessary, we may thus
assume that there exists a compact neighborhood $K$ of $x$ in $X$ such that $%
x_{\lambda}\in X\backslash K$ for every $\lambda\in\Lambda.$

We now define an auxiliary net of points in $X$ that converges to $x.$ Let $%
\mathcal{O}_{K}(x)$ stand for the family of all open neighborhoods $O$ of $x$
in $X$ such that $O\subseteq$ int$(K).$ Put 
\begin{equation*}
\Omega:=\left\{ (\lambda,O):\lambda\in\Lambda,\text{ }O\in\mathcal{O}_{K}(x)%
\text{ and }x_{\lambda}^{\downarrow}\cap O\neq\varnothing\right\} \text{,} 
\end{equation*}
and consider the partial order $\blacktriangleleft$ on $\Omega$ defined by 
\begin{equation*}
(\alpha,U)\blacktriangleleft(\beta,V)\text{\hspace{0.2in}iff\hspace{0.2in}}%
\alpha\trianglelefteq\beta\text{ and }V\subseteq U\text{.} 
\end{equation*}
Clearly, $(\Omega,\blacktriangleleft)$ is a directed poset. For each $%
(\lambda,O)\in\Omega,$ we pick any $w_{\lambda,O}\in
x_{\lambda}^{\downarrow}\cap O.$ As $x_{\lambda}^{\downarrow}\rightarrow
x^{\downarrow}$ in the Fell topology, for every open neighborhood $U$ of $x$
in $X,$ there is a $\lambda_{U}\in\Lambda$ such that 
\begin{equation*}
x_{\lambda}^{\downarrow}\in(U\cap\text{int}(K))^{-}\text{\hspace{0.2in}%
whenever\hspace{0.2in}}\lambda_{U}\trianglelefteq\lambda\text{,} 
\end{equation*}
whence 
\begin{equation*}
w_{\lambda,O}\in O\subseteq U\text{\hspace{0.2in}whenever\hspace{0.2in}}%
(\lambda_{U},U\cap\text{int}(K))\blacktriangleleft(\lambda,O)\text{.} 
\end{equation*}
Thus: $w_{\lambda,O}\rightarrow x.$

For any $\lambda\in\Lambda,$ the connectedness of $w_{\lambda,O}^{\uparrow}%
\cap x_{\lambda}^{\downarrow}$ entails that there is a $z_{\lambda,O}\in$ bd$%
(K)\cap w_{\lambda,O}^{\uparrow}\cap x_{\lambda}^{\downarrow}$ for,
otherwise, int$(K)$ and $X\backslash K$ would determine a nontrivial
separation of $w_{\lambda,O}^{\uparrow}\cap x_{\lambda}^{\downarrow}$. As bd$%
(K)$ is compact, being a closed subset of a compact set, the net $\langle
z_{\lambda,O}\rangle$ must have a cluster point $z$ in bd$(K).$ The proof
will be completed by showing that (i) $z\preceq x$ and (ii) $x\preceq z,$
hence $z=x$ (which contradicts $x$ belonging to int$(K)$).

Suppose (i) fails, that is, $z$ lies outside of $x^{\downarrow}$. Since $X$
is locally compact, $z$ must then have a compact neighborhood $C$ which is
disjoint from $x^{\downarrow}$. As $x_{\lambda}^{\downarrow}\rightarrow
x^{\downarrow}$ in the Fell topology, there is a $\lambda_{0}\in\Lambda$
such that $x_{\lambda}^{\downarrow}\cap C=\varnothing$ for every $%
\lambda\in\Lambda$ with $\lambda_{0}\trianglelefteq\lambda$. On the other
hand, since $z$ is a cluster point of $\langle z_{\lambda,O}\rangle,$ we
have $z_{\lambda,O}\in$ int$(C)$ for a cofinal set of indices $(\lambda,O)$
in $\Omega$. As $z_{\lambda,O}\in x_{\lambda}^{\downarrow}$ for all $\lambda,
$ this is a contradiction. Conclusion: $z\preceq x$.

Given that $\preceq$ is closed, (ii) follows from the fact that (a) $%
w_{\lambda,O}\rightarrow x;$ (b) $z$ is a cluster point of $\langle
z_{\lambda,O}\rangle$; and (c) $w_{\lambda,O}\preceq z_{\lambda,O}$ for
every $(\lambda,O)\in\Omega$. To be more precise, take any $V\in\mathcal{O}%
_{K}(x)$ and any open neighborhood $U$ of $z$. Since $x_{\lambda}^{%
\downarrow}\rightarrow x^{\downarrow}$ in the Fell topology, there exists $%
\lambda_{0}\in\Lambda$ such that $x_{\lambda}^{\downarrow}\cap
V\neq\varnothing,$ that is, $(\lambda,V)\in\Omega,$ for every $%
\lambda\in\Lambda$ with $\lambda_{0}\trianglelefteq\lambda$. As $z$ is a
cluster point of $\langle z_{\lambda,O}\rangle,$ there certainly exists a $%
(\lambda,W)\in\Omega$ with $\lambda_{0}\trianglelefteq\lambda$ where both $%
W\subseteq V$ and $z_{\lambda,W}\in U$ hold. Since $w_{\lambda,W}\in W$ as
well, we obtain 
\begin{equation*}
(w_{\lambda,W},z_{\lambda,W})\in W\times U\subseteq V\times U\text{.} 
\end{equation*}
As $w_{\lambda,W}\preceq z_{\lambda,W},$ this argument proves that $(x,z)$
belongs to the closure of $\preceq$ in $X\times X$. Since $\preceq$ is
closed, therefore, $x\preceq z,$ and the proof is complete.
\end{proof}

\subsection{Embedding Theorems}

Putting Theorems \ref{cont} and \ref{continv} together yields our main
embedding result.

\begin{thm}
\label{embed1}Let $(X,\preceq)$ be a locally compact and order-connected
Hausdorff topological $\wedge$-semilattice. Then, the map $x\mapsto
x^{\downarrow}$ topologically order-embeds $(X,\preceq)$ in $\mathcal{C}^{%
\text{F}}(X)$.
\end{thm}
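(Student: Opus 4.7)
The proof is essentially a direct assembly of Theorems \ref{cont} and \ref{continv}, together with a check that all the ingredients are in place. My plan is to verify, in order, (a) that both sides are genuine topological posets, (b) that $x\mapsto x^{\downarrow}$ is an order-embedding in the purely poset-theoretic sense, (c) that it is continuous, and (d) that its inverse on $\mathcal{C}_{\downarrow}(X)$ is continuous. Together (c) and (d) upgrade the bijection $X\to\mathcal{C}_{\downarrow}(X)$ to a homeomorphism onto its image (equipped with the relative Fell topology), and combined with (b) this is exactly the definition of a topological order-embedding.

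For step (a), the Hausdorff topological $\wedge$-semilattice hypothesis together with Proposition \ref{toplat} forces $(X,\preceq)$ to be a topological poset, which is what permits us to speak of topological order-embeddings out of $X$ at all. On the codomain side, local compactness of $X$ together with Proposition \ref{whenposet} guarantees that $(\mathcal{C}(X),\subseteq)$ is a topological poset when $\mathcal{C}(X)$ carries the Fell topology, so $\mathcal{C}^{\text{F}}(X)$ is a valid ambient structure. Step (b) is the standard and purely order-theoretic fact that $x\preceq y$ iff $x^{\downarrow}\subseteq y^{\downarrow}$, using reflexivity for one direction and antisymmetry/transitivity for the other; in particular the map is injective.

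Step (c) is immediate from Theorem \ref{cont}, whose hypothesis (Hausdorff topological $\wedge$-semilattice) is exactly what we have. Step (d) is where the two remaining hypotheses of the theorem are used: local compactness and order-connectedness are precisely the assumptions of Theorem \ref{continv}, which hands us continuity of $x^{\downarrow}\mapsto x$ from $\mathcal{C}_{\downarrow}(X)$ onto $X$ in the relative Fell topology. Since the image of $x\mapsto x^{\downarrow}$ is $\mathcal{C}_{\downarrow}(X)$ by definition, there is no mismatch between the codomain of the inverse and the image of the direct map.

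There is no genuine obstacle here beyond bookkeeping: the real work has already been done in the two preceding theorems. If anything, the point that deserves a sentence is the observation that a continuous order-isomorphism with continuous inverse between $X$ and $\mathcal{C}_{\downarrow}(X)\subseteq\mathcal{C}^{\text{F}}(X)$ is by definition a topological order-embedding of $(X,\preceq)$ into $(\mathcal{C}^{\text{F}}(X),\subseteq)$, since restricting and corestricting a homeomorphism to its image yields a topological embedding. Thus the theorem follows in a few lines by invoking Propositions \ref{toplat} and \ref{whenposet} to set the stage and then citing Theorems \ref{cont} and \ref{continv}.
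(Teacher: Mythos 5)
Your proposal is correct and follows exactly the paper's route: the paper obtains Theorem \ref{embed1} by simply combining Theorems \ref{cont} and \ref{continv}, with Proposition \ref{toplat} supplying the fact that a Hausdorff topological $\wedge$-semilattice is a topological poset. Your additional bookkeeping (Proposition \ref{whenposet} for the codomain and the order-embedding property of $x\mapsto x^{\downarrow}$) is consistent with what the paper leaves implicit.
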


For any topological poset $(X,\preceq),$ let $\mathcal{C}_{\downarrow}^{%
\text{F}}(X)$ stand for $(\mathcal{C}_{\downarrow}(X),\tau_{F}),$ that is,
the space of all principal ideals of $(X,\preceq)$ endowed with the relative
Fell topology. Theorem \ref{embed1} entails that $X$ can be identified with $%
\mathcal{C}_{\downarrow}^{\text{F}}(X)$ topologically, and $(X,\preceq)$
with $(\mathcal{C}_{\downarrow}^{\text{F}}(X),\subseteq)$
order-theoretically, by means of the same morphism. It follows that, in the
context of this theorem, $(\mathcal{C}_{\downarrow}^{\text{F}}(X),\subseteq)$
is a locally compact and order-connected Hausdorff topological $\wedge$%
-semilattice. In particular, even though the $\cap$ operation on $\mathcal{C}%
^{\text{F}}(X)$ is not continuous (even when $X$ is a metric continuum), it
is continuous on $\mathcal{C}_{\downarrow}^{\text{F}}(X)$. To see this,
define $\varphi:X\rightarrow\mathcal{C}_{\downarrow}^{\text{F}}(X)$ by $%
\varphi(x):=x^{\downarrow}$. Then, if $\left\langle x_{\lambda}\right\rangle 
$ and $\left\langle y_{\lambda}\right\rangle $ are two nets in $X$ with $%
x_{\lambda}^{\downarrow}\rightarrow x^{\downarrow}$ and $y_{\lambda}^{%
\downarrow}\rightarrow y^{\downarrow}$ for some $x,y\in X,$ we have 
\begin{equation*}
x_{\lambda}\wedge
y_{\lambda}=\varphi^{-1}(x_{\lambda}^{\downarrow})\wedge\varphi^{-1}(y_{%
\lambda}^{\downarrow})\rightarrow\varphi^{-1}(x^{\downarrow})\wedge%
\varphi^{-1}(y^{\downarrow})=x\wedge y 
\end{equation*}
because $\varphi^{-1}$ and $\wedge$ are continuous. As $(a\wedge
b)^{\downarrow}=a^{\downarrow}\cap b^{\downarrow}$ in the context of any $%
\wedge$-semilattice, therefore, 
\begin{equation*}
x_{\lambda}^{\downarrow}\cap y_{\lambda}^{\downarrow}=(x_{\lambda}\wedge
y_{\lambda})^{\downarrow}=\varphi(x_{\lambda}\wedge
y_{\lambda})\rightarrow\varphi(x\wedge y)=(x\wedge
y)^{\downarrow}=x^{\downarrow}\cap y^{\downarrow} 
\end{equation*}
in view of the continuity of $\varphi$.

If only to highlight the nontrivial nature of this fact, we next present an
example that shows that even when $(X,\preceq)$ is a compact, connected, and
order-connected topological poset which happens to be an $\wedge$%
-semilattice, the intersection operation on $\mathcal{C}_{\downarrow}^{\text{%
F}}(X)$ does not have to be (even separately) continuous.

\begin{example}
Let $\mathbf{0}$ stand for the origin of the Hilbert space $\ell_{2},$ and
consider the sequence $a_{0}\in\ell_{2}$ where $a_{0,2k-1}:=0$ and $%
a_{0,2k}:=\frac{1}{k}$ for every $k\in\mathbb{N}$. Next, for each positive
integer $n,$ let $a_{n}\in\ell_{2}$ be the perturbation of $a_{0}$ defined
by $a_{n,2n-1}:=\frac{1}{n},$ $a_{n,2n}:=0,$ and $a_{n,k}:=a_{0,k}$
otherwise. Since $\left\Vert a_{n}-a_{0}\right\Vert _{2}=\frac{\sqrt{2}}{n}%
\rightarrow0,$ the set $\{a_{0},a_{1},...\}$ is compact. We put $C_{n}:=%
\text{conv}(\{\mathbf{0},a_{n}\})$ for each nonnegative integer $n,$ and
define $X:=C_{0}\cup C_{1}\cup\cdot\cdot\cdot$ which we consider as a
(metric) subspace of $\ell_{2}.$ As $X$ is the continuous image of the
compact set $\{a_{0},a_{1},...\}\times[0,1],$ it is compact. Moreover, being
the union of a collection of convex subsets of $\ell_{2}$ with a point in
common, $X$ is connected.

Clearly, $(X,\preceq)$ is a topological poset, where $\preceq$ is the
coordinatewise order. By construction, for any $x,y\in X$, we have $x\preceq
y$ iff $x=\alpha a_{n}$ and $y=\beta a_{n}$ for some $n\geq0$ and $%
\alpha,\beta\in\lbrack0,1]$ with $\alpha\leq\beta$. It follows that the
order-intervals of $(X,\preceq)$ are convex subsets of $\ell_{2},$ so $%
(X,\preceq)$ is order-connected. Finally, while $\preceq$ is total on $C_{n}$
for any $n\geq0,$ if $x\in C_{n}$ and $y\in C_{m}$ for distinct $m,n\geq0,$
we have $x\wedge y=\mathbf{0}.$ Thus $(X,\preceq)$ is an $\wedge$%
-semilattice as well.

Now, notice that $a_{n}\rightarrow a_{0}$ implies $H(a_{n}^{%
\downarrow},a_{0}^{\downarrow})\rightarrow0,$ where $H$ is the Hausdorff
metric. As the Fell topology is coarser than the Hausdorff metric topology
on $\mathcal{C}(\ell_{2}),$ it follows that $a_{n}^{\downarrow}\rightarrow
a^{\downarrow}$ in the Fell topology. But $a_{n}^{\downarrow}\cap
a_{0}^{\downarrow}=\{\mathbf{0}\}$ for each $n>0$ while $a_{0}^{\downarrow}%
\cap a_{0}^{\downarrow}=a_{0}^{\downarrow}\neq\{\mathbf{0}\}$. Conclusion: $%
\cap$ is not (separately) continuous on $\mathcal{C}_{\downarrow}^{\text{F}%
}(X)$.
\end{example}

Order enters to the hypotheses of Theorem \ref{embed1} from the channels of
the semilattice property and order-connectedness. If we strengthen the
former property to being a topological lattice, then we can replace the
latter channel with a purely topological one. This is our second embedding
theorem.

\begin{thm}
\label{embed2}Let $(X,\preceq)$ be a locally compact and connected Hausdorff
topological lattice. Then, the map $x\mapsto x^{\downarrow}$ topologically
order-embeds $(X,\preceq)$ in $\mathcal{C}^{\text{F}}(X)$.
\end{thm}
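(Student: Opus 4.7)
The plan is to reduce Theorem \ref{embed2} to Theorem \ref{embed1}. Since a topological lattice is in particular a Hausdorff topological $\wedge$-semilattice, and $X$ is locally compact by assumption, the only hypothesis of Theorem \ref{embed1} that needs to be verified is that $(X,\preceq)$ is order-connected. Once order-connectedness is established, Theorem \ref{embed1} delivers the conclusion immediately.

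To establish order-connectedness, I would invoke the earlier proposition relating order-connectedness to connected principal filters: for a topological $\wedge$-semilattice, it suffices to prove that each principal filter $x^{\uparrow}$ is connected. This is where the connectedness of $X$ combined with the lattice structure enters. I would consider the map $\varphi_{x}\colon X\to X$ defined by $\varphi_{x}(z):=x\vee z$. This map is continuous because $(X,\preceq)$ is a topological lattice, so $\vee$ is continuous from $X\times X$ into $X$, and hence so is its restriction along $z\mapsto (x,z)$. The image of $\varphi_{x}$ is contained in $x^{\uparrow}$ by definition of $\vee$, and every $w\in x^{\uparrow}$ satisfies $\varphi_{x}(w)=x\vee w=w$, so $\varphi_{x}(X)=x^{\uparrow}$. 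Since $X$ is connected and $\varphi_{x}$ is continuous, $x^{\uparrow}$ is connected.

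Thus all principal filters are connected, the cited proposition yields order-connectedness of $(X,\preceq)$, and Theorem \ref{embed1} applies. There is no real obstacle to the argument: the theorem is essentially a corollary of the previous one, and the sole content is to observe that in the presence of a continuous $\vee$-operation, the topological hypothesis of connectedness of $X$ upgrades to the order-theoretic hypothesis of order-connectedness needed by Theorem \ref{embed1}.
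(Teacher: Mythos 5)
Your proposal is correct and follows essentially the same route as the paper: both reduce to Theorem \ref{embed1} by showing order-connectedness via a continuous lattice-operation map defined on the connected space $X$. The paper does this in one step, exhibiting the order interval directly as $\eta(X)$ for $\eta(z):=(z\vee x)\wedge y$, whereas you factor through the connectedness of principal filters and the earlier proposition; this is only a cosmetic difference (and note that the converse direction of that proposition, as its proof shows, indeed only needs connected filters, so your omission of the principal ideals is harmless).
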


\begin{proof}
Take any $x,y\in X$ with $x\preceq y,$ and consider the map $%
\eta:X\rightarrow X$ defined by $\eta(z):=(z\vee x)\wedge y.$ Then, $\eta$
is continuous (as $\wedge$ and $\vee$ are continuous) and $x^{\uparrow}\cap
y^{\downarrow}=\eta(X),$ so $x^{\uparrow}\cap y^{\downarrow}$ is connected.
It follows that $(X,\preceq)$ is order-connected, and Theorem \ref{embed1}
applies.
\end{proof}

\section{Applications}

In this section we provide five applications of the above theorems to
various topics in topological order theory.

\subsection{On Radially Convex Metrization}

Let $(X,\preceq)$ be a poset. We recall that a metric $d$ on $X$ is \textit{%
radially convex }(relative to $\preceq$) if 
\begin{equation*}
x\preceq y\preceq z\hspace{0.2in}\text{implies\hspace{0.2in}}%
d(x,z)\geq\max\{d(x,y),d(y,z)\}. 
\end{equation*}
This concept builds a tight connection between the order and metric
structures that may be imposed on a given set. For example, every order
interval of a poset that is endowed with a radially convex metric is bounded
with respect to that metric. It is thus of interest when one can remetrize a
given metric poset by means of a radially convex metric (without changing
the topology). The major result in this regard, due to Carruth \cite{Carruth}%
, is the following:

\bigskip{}

\noindent \textbf{The Urysohn-Carruth Metrization Theorem.} \textit{Every
compact metric poset can be equivalently remetrized by a radially convex
metric.}

\bigskip{}

The classical \textit{Nachbin extension theorem} says that if $(X,\preceq)$
is a compact topological poset and $S$ a nonempty closed subset of $X$, then
every order-preserving and continuous real map on $S$ can be extended to $X$
in an order-preserving and continuous manner.\footnote{%
This result obtains by putting Theorems 4 and 6 of Chapter 2 of Nachbin \cite%
{Na}.} Carruth \cite{Carruth} uses this result to build a continuous
order-embedding from $X$ into the coordinatewise ordered Hilbert cube. This
bit can be generalized to the context of locally compact and separable
metric posets. But while Carruth's order-embedding is automatically a
topological embedding (because $X$ is compact), it is not clear if this is
so in the more general case as well. Indeed, it is presently unknown if the
Urysohn-Carruth metrization theorem is valid for locally compact and
separable metric posets. However, by using the embedding established in
Theorem \ref{embed1}, we get a positive result to this query in the case of
order-connected metric semilattices.

Before we state the main result of this section, we recall that, given any
metric space $(X,d)$, the \textit{Wijsman topology} is the weak topology on $%
\mathcal{C}_{0}(X)$ induced by the family $\{d(x,\cdot):x\in X\}$ \cite%
{Be,LL}. Thus, a net $\left\langle S_{\lambda}\right\rangle $ in $\mathcal{C}%
_{0}(X)$ converges to an $S\in\mathcal{C}_{0}(X)$ relative to the Wijsman
topology iff $d(x,S_{\lambda})\rightarrow d(x,S)$ for each $x\in X.$

\begin{thm}
\label{U-C}Every locally compact, second-countable and order-connected
Hausdorff topological $\wedge$-semilattice $(X,\preceq)$\ can be metrized by
means of an equivalent radially convex metric.
\end{thm}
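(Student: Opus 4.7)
The plan is to reduce this to the classical Urysohn--Carruth metrization theorem applied not to $X$ itself, but to the hyperspace $\mathcal{C}^{\text{F}}(X)$, and then transport the resulting radially convex metric back along the canonical order-embedding. The whole point is that even though $X$ need not be compact, the hyperspace $\mathcal{C}^{\text{F}}(X)$ is, and Theorem \ref{embed1} puts $X$ faithfully inside it.

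First I would verify that the ambient hyperspace $(\mathcal{C}^{\text{F}}(X), \subseteq)$ is a compact metric poset. Under the hypotheses of Theorem \ref{U-C}, the results recalled in Section 3 give immediately: $\mathcal{C}^{\text{F}}(X)$ is always compact; it is Hausdorff since $X$ is locally compact; and because $X$ is second countable and locally compact, the bulleted countable-subbase observation combined with the Urysohn metrization theorem makes $\mathcal{C}^{\text{F}}(X)$ metrizable. By Proposition \ref{whenposet}, $\subseteq$ is closed in $\mathcal{C}^{\text{F}}(X) \times \mathcal{C}^{\text{F}}(X)$, so $(\mathcal{C}^{\text{F}}(X), \subseteq)$ is indeed a compact metric poset.

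Next I would invoke the Urysohn--Carruth metrization theorem on $(\mathcal{C}^{\text{F}}(X), \subseteq)$ to obtain an equivalent radially convex metric $D$ on $\mathcal{C}^{\text{F}}(X)$; that is, $D$ induces the Fell topology and satisfies $D(A,C) \geq \max\{D(A,B), D(B,C)\}$ whenever $A \subseteq B \subseteq C$. Then I would appeal to Theorem \ref{embed1}, which applies precisely because $(X, \preceq)$ is a locally compact, order-connected Hausdorff topological $\wedge$-semilattice, to conclude that $\varphi \colon x \mapsto x^{\downarrow}$ is a topological order-embedding of $(X, \preceq)$ into $(\mathcal{C}^{\text{F}}(X), \subseteq)$.

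Finally, define $d$ on $X$ by $d(x,y) := D(x^{\downarrow}, y^{\downarrow})$. Since $\varphi$ is a homeomorphism onto its image, $d$ is a metric on $X$ equivalent to the given topology. Since $\varphi$ is an order-embedding, $x \preceq y \preceq z$ translates to $x^{\downarrow} \subseteq y^{\downarrow} \subseteq z^{\downarrow}$, and the radial convexity of $D$ for $\subseteq$ becomes radial convexity of $d$ for $\preceq$. There is no real technical obstacle here: the content of the theorem is the recognition that Theorem \ref{embed1} is powerful enough to lift Urysohn--Carruth from the compact setting to the locally compact semilattice setting, and all that remains is the bookkeeping verification that second countability plus local compactness makes the target hyperspace a compact metric poset to which the classical theorem applies.
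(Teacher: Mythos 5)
Your proof is correct, but it takes a genuinely different route from the paper's. You apply the Urysohn--Carruth theorem itself to the hyperspace: you observe that $\mathcal{C}^{\text{F}}(X)$ is compact, Hausdorff (by local compactness of $X$), metrizable (by second countability plus the countable-subbase remark and Urysohn metrization), and has closed order $\subseteq$ (Proposition \ref{whenposet}), so that $(\mathcal{C}^{\text{F}}(X),\subseteq)$ is a compact metric poset to which Carruth's theorem applies as a black box; you then pull the resulting radially convex metric back along the embedding of Theorem \ref{embed1}. The paper instead constructs the radially convex metric on the hyperspace \emph{explicitly}: it invokes Vaughan's theorem to remetrize $X$ by a boundedly compact metric $d$, forms the Wijsman-type distance $\rho_{d}(A,B):=\sum_{i\geq 1}2^{-i}\min\{1,\lvert d(x_{i},A)-d(x_{i},B)\rvert\}$ over a countable dense set, uses the coincidence of the Wijsman and Fell topologies for boundedly compact metrics to see that $\rho_{d}$ induces the Fell topology on $\mathcal{C}_{0}(X)$, and verifies radial convexity directly from the monotonicity of $A\mapsto d(x,A)$ under inclusion; the final pull-back step via Theorem \ref{embed1} is the same in both arguments. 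Your version is shorter and makes the conceptual point --- that Theorem \ref{embed1} lifts the compact metrization theorem to the locally compact setting --- most transparently, at the cost of inheriting the non-constructive character of Carruth's proof (which goes through Nachbin's extension theorem and the Hilbert cube); the paper's version is longer but self-contained modulo standard hyperspace facts and produces a concrete formula for the radially convex metric. Both are valid proofs of the statement.
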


\begin{proof}
A well-known result that goes back to Vaughan \cite{Vag} says that every
locally compact and second-countable Hausdorff space can be metrized by a
boundedly compact metric so that every closed and bounded set in the space
is compact. Let $d$ be such a metric on $X,$ and note that $X$ is separable
(as it is second-countable). Let $\{x_{1},x_{2},...\}$ be a countable dense
set in $X,$ and define $\rho_{d}:\mathcal{C}_{0}(X)\times\mathcal{C}%
_{0}(X)\rightarrow[0,\infty)$ by 
\begin{equation*}
\rho_{d}(A,B):=\sum_{i\geq1}2^{-i}\min\{1,\left\vert
d(x_{i},A)-d(x_{i},B)\right\vert \}\text{.} 
\end{equation*}
It is well-known that $\rho_{d}$ metrizes the Wijsman topology on $\mathcal{C%
}_{0}(X)$ \cite[p. 37]{Be}. Moreover, bounded compactness of $d$ ensures
that the Fell topology and the Wijsman topology on $\mathcal{C}_{0}(X)$ are
the same \cite[Theorem 5.1.10]{Be}. We may thus conclude that the metric
topology induced by $\rho_{d}$ is the Fell topology on $\mathcal{C}_{0}(X).$
Moreover, $d(x,B)\leq d(x,A)$ for every $x\in X$ and $A,B\in\mathcal{C}%
_{0}(X)$ with $A\subseteq B.$ Consequently, for every $A,B,C\in\mathcal{C}%
_{0}(X)$ with $A\subseteq B\subseteq C,$ we have 
\begin{equation*}
d(x,A)-d(x,C)\geq d(x,B)-d(x,C)\hspace{0.2in}\text{for all }x\in X, 
\end{equation*}
and it follows that $\rho_{d}(A,C)\geq\rho_{d}(B,C).$ As one similarly shows
that $\rho_{d}(A,C)\geq\rho_{d}(A,B)$, it follows that $\rho_{d}$ is a
radially convex metric on $\mathcal{C}_{0}(X)$ relative to the containment
ordering $\subseteq$.

We now define $D:X\times X\rightarrow[0,\infty)$ by $D(x,y):=\rho_{d}(x^{%
\downarrow},y^{\downarrow})$. As the canonical order-embedding from $X$ into 
$\mathcal{C}_{0}^{\text{F}}(X)$ is a topological embedding (Theorem \ref%
{embed1}), it is plain that $D$ is a metric on $X$ that induces the topology
of $X.$ Moreover, as $\rho_{d}$ is radially convex relative to $\subseteq$,
and $x\preceq y$ iff $x^{\downarrow}\subseteq y^{\downarrow}$, $D$ is
radially convex relative to $\preceq$.
\end{proof}

\subsection{On Complete $\wedge$-Homomorphisms}

Let $(X_{1},\preceq_{1})$ and $(X_{2},\preceq_{2})$ be two $\wedge$\textit{-}%
semilattices. We recall that a map $f:X\rightarrow Y$ is \textit{%
order-preserving} if $x\preceq_{1}y$ implies $f(x)\preceq_{2}f(y),$ \textit{%
order-reversing} if $x\preceq_{1}y$ implies $f(y)\preceq_{2}f(x),$ and that
it is an $\wedge$\textit{-homomorphism} if $f(x\wedge y)=f(x)\wedge f(y)$
for every $x,y\in X_{1}.$ An $\wedge$-homomorphism is always
order-preserving, but not conversely. If both $(X_{1},\preceq_{1})$ and $%
(X_{2},\preceq_{2})$ are complete $\wedge$\textit{-}semilattices, and $%
f(\bigwedge S)=\bigwedge f(S)$ for every nonempty $S\subseteq X_{1},$ we say
that $f$ is a \textit{complete }$\wedge$\textit{-homomorphism. }

Where $(X,\preceq)$ is any poset, we follow Gierz et al. \cite{Dom} in
saying that a nonempty subset $S$ of $X$ is $\preceq$\textit{-filtered }if
every nonempty finite subset of $S$ has a $\preceq$-lower bound in $S$. For
instance, if restricting $\preceq$ to $S$ yields an $\wedge$\textit{-}%
semilattice, then $S$ is sure to be $\preceq$-filtered. In particular, for
every $\preceq$-decreasing sequence $\langle x_{n}\rangle$ in $X$, the set $%
\{x_{1},x_{2},...\}$ is $\preceq$-filtered.

In this application, we are interested in understanding when a map between
complete Hausdorff topological $\wedge$-semilattices preserve the infima of
filtered sets. This is not a trivial matter in that even when such a map $f$
is a continuous $\wedge$-homomorphism, and $\langle x_{n}\rangle$ is a
decreasing sequence in the domain of $f$, the equation $f(x_{1}\wedge
x_{2}\wedge\cdot\cdot\cdot)=f(x_{1})\wedge f(x_{2})\wedge\cdot\cdot\cdot$
may fail.\footnote{%
For any countable set $\{x_{1},x_{2},...\}$ in an $\wedge$-semilattice, we
write $x_{1}\wedge x_{2}\wedge\cdot\cdot\cdot$ to mean $\bigwedge%
\{x_{1},x_{2},...\}$.}

\begin{example}
Let $X:=\{-1\}\cup(0,1]$ and $Y:=\{-1\}\cup[0,1],$ and endow these sets with
the usual order and topology. This makes both of these sets locally compact
and totally bounded metric spaces that are also complete topological
lattices. Now define $f:X\rightarrow Y$ by $f(x):=x,$ which is of course a $%
\wedge$-homomorphism and a topological embedding. And yet $f(1\wedge\frac{1}{%
2}\wedge\frac{1}{3}\wedge\cdot\cdot\cdot)=-1$ while $f(1)\wedge f(\frac{1}{2}%
)\wedge\cdot\cdot\cdot=0.$
\end{example}

Our first theorem in this application shows that such counter-examples do
not arise so long as the complete topological semilattice in the domain is
locally compact \textit{and} order-connected. It may be worth noting that
this result imposes virtually no additional conditions on the target
complete topological semilattice.

\begin{thm}
\label{app2thm}Let $(X_{1},\preceq_{1})$\ and $(X_{2},\preceq_{2})$\ be two
topological posets that happen to be complete $\wedge$-semilattices. Suppose 
$(X_{1},\preceq_{1})$ is locally compact and order-connected, and $%
f:X_{1}\rightarrow X_{2}$ is an order-preserving and continuous map. Then, $%
f(\bigwedge S)=\bigwedge f(S)$\ for every $\preceq_{1}$-filtered subset\ $S$%
\ of\ $X_{1}.$
\end{thm}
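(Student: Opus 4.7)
The plan is to reduce the claim to the continuity of the inverse canonical embedding established in Theorem \ref{continv}. Since $f$ is order-preserving, $f(\bigwedge S)$ is automatically a $\preceq_2$-lower bound for $f(S)$, so $f(\bigwedge S) \preceq_2 \bigwedge f(S)$. The substantive content of the theorem is the reverse inequality, and I would prove it by realizing $\bigwedge S$ as the limit of $S$ itself, viewed as a net.

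The key observation is that the filtered hypothesis promotes $S$ to a directed set under the reverse order $\succeq_1$: given $s_1,s_2 \in S$, the filtered property yields $s_3 \in S$ with $s_3 \preceq_1 s_1$ and $s_3 \preceq_1 s_2$. Viewing $S$ as a net in $X_1$ indexed by itself with this direction, the associated net $\langle s^{\downarrow}\rangle_{s \in S}$ of principal ideals is \emph{decreasing} in $\mathcal{C}(X_1)$ with respect to $\subseteq$ (advancing in the net drops $s$ in $\preceq_1$, hence shrinks $s^{\downarrow}$). By the final bulleted fact of Section 3, this decreasing net Fell-converges to $\bigcap_{s \in S} s^{\downarrow}$, which is precisely $(\bigwedge S)^{\downarrow}$ by the definition of infimum. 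Since $(X_1,\preceq_1)$ is locally compact and order-connected, Theorem \ref{continv} applies to the net $\langle s^{\downarrow}\rangle$ and its limit $(\bigwedge S)^{\downarrow}$, all of which lie in $\mathcal{C}_{\downarrow}(X_1)$, yielding $s \to \bigwedge S$ in $X_1$.

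From here the conclusion is essentially a diagram chase. Continuity of $f$ transports the convergence to $X_2$: $f(s) \to f(\bigwedge S)$. On the other hand, $\bigwedge f(S) \preceq_2 f(s)$ for every $s \in S$, and since $(X_2,\preceq_2)$ is a topological poset the order $\preceq_2$ is closed in $X_2 \times X_2$; passing to the limit in the net $\langle(\bigwedge f(S),f(s))\rangle \subseteq \preceq_2$ gives $\bigwedge f(S) \preceq_2 f(\bigwedge S)$. Combining with the easy inequality and invoking antisymmetry of $\preceq_2$ delivers $f(\bigwedge S) = \bigwedge f(S)$.

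The only genuinely nontrivial step is the second one: recognizing that the filtered structure of $S$ is precisely what is needed to form a decreasing net of principal ideals and to invoke both the Fell-convergence of decreasing nets and Theorem \ref{continv}. After that, the continuity of $f$ and the closedness of $\preceq_2$ do all the remaining work, with no topological or algebraic hypotheses required of $X_2$ beyond those already imposed.
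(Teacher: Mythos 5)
Your proof is correct and follows essentially the same route as the paper: both arguments produce a monotone net of principal ideals Fell-converging to $(\bigwedge S)^{\downarrow}$, apply Theorem \ref{continv} to conclude that the underlying net of points converges to $\bigwedge S$, and then finish via continuity of $f$ and closedness of $\preceq_{2}$. The only difference is minor: you index the net by $S$ itself directed by $\succeq_{1}$ (which the filtered hypothesis makes a directed set), obtaining a genuinely $\subseteq$-decreasing net so that the standard convergence fact for decreasing nets suffices, whereas the paper indexes by the nonempty finite subsets of $S$ and chooses lower bounds $y_{A}\in S$, obtaining only an essentially decreasing net and hence needing the auxiliary Proposition \ref{app2prop}.
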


In the proof of this result we shall utilize the notions of decreasing and
essentially decreasing nets in a poset. Let $(X,\preceq)$ be a poset and $%
\langle x_{\lambda}\rangle$ a net in $X$ with the underlying directed set $%
(\Lambda,\trianglelefteq).$ We say that $\langle x_{\lambda}\rangle$ is $%
\preceq$-\textit{decreasing} if $x_{\beta}\preceq x_{\alpha}$ holds for
every $\alpha,\beta\in\Lambda$ with $\alpha\trianglelefteq\beta$. More
generally, we say that $\langle x_{\lambda}\rangle$ is \textit{essentially }$%
\preceq$-\textit{decreasing} if for every $\alpha\in\Lambda$ there exists a $%
\beta\in\Lambda$ such that $x_{\lambda}\preceq x_{\alpha}$ holds for every $%
\lambda\in\Lambda$ with $\beta\trianglelefteq\lambda$.

The following proposition, which highlights another desirable property of
the Fell topology, will be used to prove the above theorem.

\begin{prop}
\label{app2prop}Let $X$ be a Hausdorff space and $\langle S_{\lambda}\rangle$
an essentially $\subseteq$-decreasing net in $\mathcal{C}(X)$ with the
underlying directed set $(\Lambda,\trianglelefteq).$ Then, $%
S_{\lambda}\rightarrow\bigcap\nolimits _{\lambda\in\Lambda}S_{\lambda}$ in
the Fell topology.
\end{prop}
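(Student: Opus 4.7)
The plan is to work directly with the Fell subbase: sets of the form $O^-$ with $O \subseteq X$ open and $(X \setminus K)^+$ with $K \subseteq X$ compact. Writing $S := \bigcap_{\lambda \in \Lambda} S_\lambda$, note that $S \in \mathcal{C}(X)$ as an intersection of closed sets. Since a net converges to a point of a space iff it is eventually in every subbasic open neighborhood of that point, $S_\lambda \to S$ in $\tau_F$ reduces to two conditions: (a) whenever $O$ is open with $S \cap O \neq \varnothing$, we have $S_\lambda \cap O \neq \varnothing$ eventually; and (b) whenever $K$ is compact with $S \cap K = \varnothing$, we have $S_\lambda \cap K = \varnothing$ eventually.

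Condition (a) is immediate and needs no use of the hypothesis: $S \subseteq S_\lambda$ for every $\lambda$, so any $x \in S \cap O$ lies in $S_\lambda \cap O$ for every $\lambda$. The substance of the proposition resides in (b). Given compact $K$ with $K \cap S = \varnothing$, one has $\bigcap_\lambda (K \cap S_\lambda) = \varnothing$. Each $K \cap S_\lambda$ is closed in the compact set $K$, so compactness of $K$ delivers indices $\lambda_1, \dots, \lambda_n$ with $\bigcap_{i=1}^n (K \cap S_{\lambda_i}) = \varnothing$.

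If $\langle S_\lambda \rangle$ were literally $\subseteq$-decreasing, one could simply pick $\mu$ above $\lambda_1, \dots, \lambda_n$ and conclude $S_\nu \subseteq S_\mu \subseteq \bigcap_i S_{\lambda_i}$ for every $\nu \trianglerighteq \mu$, which finishes. Under only essential decreasingness, $S_\mu$ need not sit inside $S_{\lambda_i}$, and this is the one delicate point of the proof. The remedy is to invoke essential decreasingness separately at each $\lambda_i$ to obtain $\gamma_i \in \Lambda$ with $S_\nu \subseteq S_{\lambda_i}$ whenever $\nu \trianglerighteq \gamma_i$, and then to use directedness of $\Lambda$ to pick a single $\gamma$ above all the $\gamma_i$. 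For every $\nu \trianglerighteq \gamma$ one then has $S_\nu \subseteq \bigcap_{i=1}^n S_{\lambda_i}$, hence $K \cap S_\nu = \varnothing$, as required.

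The main (mild) obstacle is exactly this bridging step: essential decreasingness is a pointwise condition attached to each index individually, whereas the compactness argument produces a finite family of indices that must all be controlled simultaneously. Directedness of $\Lambda$ is what allows one to take the join of finitely many thresholds and recover the same ``eventual tail'' behaviour enjoyed by genuinely decreasing nets.
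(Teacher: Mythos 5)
Your proof is correct and follows essentially the same route as the paper's: reduce to the two subbasic conditions, observe that $S\subseteq S_\lambda$ handles the $O^-$ sets, and use compactness of $K$ to extract finitely many indices whose terms already separate $K$ from the tail, then combine the finitely many thresholds furnished by essential decreasingness via directedness. The only cosmetic difference is that you phrase the compactness step through the finite intersection property where the paper uses the dual open-cover formulation, and you spell out the threshold-combination step that the paper leaves implicit.
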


\begin{proof}
We put $S:=\bigcap\nolimits _{\lambda\in\Lambda}S_{\lambda}$. Our goal is to
show that $\langle S_{\lambda}\rangle$ is eventually in any subbasic open
neighborhood of $S$ (relative to the Fell topology). To this end, notice
first that if $S\in O^{-}$ for some open set $O$ in $X,$ then $%
S_{\lambda}\cap O\supseteq S\cap O\neq\varnothing,$ whence $S_{\lambda}\in
O^{-}$, for each $\lambda.$ Next, suppose $S\in(X\backslash K)^{+}$ for some
compact $K\subseteq X.$ If $K=\varnothing,$ we trivially have $%
S_{\lambda}\in(X\backslash K)^{+}$ for each $\lambda,$ so assume instead $K$
is nonempty. As $K\cap S=\varnothing,$ for every $x\in K$ there is a $%
\lambda(x)\in\Lambda$ such that $x$ does not belong to $S_{\lambda(x)}$.
Then, $\{X\backslash S_{\lambda(x)}:x\in K\}$ is an open cover of $K,$ so
there is a finite $F\subseteq K$ such that $K\subseteq\bigcup\nolimits
_{x\in F}X\backslash S_{\lambda(x)}$, that is, $K\cap\bigcap\nolimits _{x\in
F}S_{\lambda(x)}=\varnothing$. But, as $\langle S_{\lambda}\rangle$ is
essentially $\subseteq$-decreasing, there exists a $\beta\in\Lambda$ such
that $S_{\lambda}\subseteq\bigcap\nolimits _{x\in F}S_{\lambda(x)}$, whence $%
K\cap S_{\lambda}=\varnothing$, for every $\lambda\in\Lambda$ with $%
\beta\trianglelefteq\lambda$. This proves that $\langle S_{\lambda}\rangle$
is eventually in $(X\backslash K)^{+},$ as desired.
\end{proof}

\noindent \textbf{Proof of Theorem \ref{app2thm}.} Let $S$ be a $\preceq_{1}$%
-filtered subset\textit{\ }of\textit{\ }$X_{1},$ and consider the directed
poset $(\mathcal{F}_{0}(S),\subseteq)$ where $\mathcal{F}_{0}(S)$ is the
family of all nonempty finite subsets of $S.$ Then, for every $A\in\mathcal{F%
}_{0}(S)$ there exists a $y_{A}\in S$ such that $y_{A}\preceq_{1}\bigwedge A$
(but note that $\bigwedge A$ need not lie in $S$). We note that the net $%
\langle y_{A}^{\downarrow}\rangle$ with the underlying directed set $(%
\mathcal{F}_{0}(S),\subseteq)$ is essentially $\subseteq$-decreasing.
Indeed, for any $A\in\mathcal{F}_{0}(S)$, letting $B:=A\cup\{y_{A}\}$ we
find $y_{C}\preceq_{1}\bigwedge C\preceq_{1}\bigwedge B\preceq_{1}y_{A}$,
whence $y_{C}^{\downarrow}\subseteq y_{A}^{\downarrow}$, for every $C\in%
\mathcal{F}_{0}(S)$ with $B\subseteq C$.

Now, it is easily verified that 
\begin{equation*}
\bigcap_{A\in\mathcal{F}_{0}(S)}y_{A}^{\downarrow}=\left(\bigwedge
S\right)^{\downarrow}\text{.} 
\end{equation*}
Consequently, by Proposition \ref{app2prop}, and because $\langle
y_{A}^{\downarrow}\rangle$ is essentially $\subseteq$-decreasing, we have $%
y_{A}^{\downarrow}\rightarrow\left(\bigwedge S\right)^{\downarrow}$ relative
to the Fell topology. By Theorem \ref{continv}, therefore, $%
y_{A}\rightarrow\bigwedge S$. By continuity of $f,$ then, $%
f(y_{A})\rightarrow f(\bigwedge S)$. Since $\bigwedge f(S)\preceq_{2}f(y_{A})
$ for each $A\in\mathcal{F}_{0}(S)\ $and $\preceq_{2}$ is closed, it follows
that $\bigwedge f(S)\preceq_{2}f(\bigwedge S)$. On the other hand, $%
f(\bigwedge S)\preceq_{2}\bigwedge f(S)$ simply because $f(\bigwedge S)$ is
a $\preceq_{2}$-lower bound for $f(S)$ (as $f$ is order-preserving). We
conclude that $f(\bigwedge S)=\bigwedge f(S)$. $\blacksquare$

\bigskip{}

The following observation provides a reflection of Theorem \ref{app2thm} for
essentially decreasing sequences.

\begin{cor}
Let $(X_{1},\preceq_{1})$, $(X_{2},\preceq_{2}),$\ and $f$ be as in Theorem %
\ref{app2thm}. Then, 
\begin{equation}
f(x_{1}\wedge x_{2}\wedge\cdot\cdot\cdot)=\lim f(x_{n})=f(x_{1})\wedge
f(x_{2})\wedge\cdot\cdot\cdot  \label{cc}
\end{equation}
for every essentially $\preceq_{1}$-decreasing sequence $\langle x_{n}\rangle
$ in $X_{1}.$
\end{cor}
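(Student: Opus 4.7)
The plan is to reduce the corollary to Theorem \ref{app2thm} by noting that the underlying set of an essentially decreasing sequence is $\preceq_1$-filtered, and to obtain the middle equality by tracking the sequence itself through the machinery used in the proof of Theorem \ref{app2thm}.

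First, let $\langle x_n\rangle$ be an essentially $\preceq_1$-decreasing sequence in $X_1$ and put $S:=\{x_1,x_2,\ldots\}$. I would start by checking that $S$ is $\preceq_1$-filtered. Given any finite $\{x_{n_1},\ldots,x_{n_k}\}\subseteq S$, the essential decrease supplies, for each $i$, an index $m_i$ past which every term lies below $x_{n_i}$; taking $m:=\max_i m_i$ produces $x_m\in S$ with $x_m\preceq_1 x_{n_i}$ for all $i$. This checks the filteredness hypothesis. Applying Theorem \ref{app2thm} to $S$ then yields $f(\bigwedge S)=\bigwedge f(S)$, which is precisely the outer equality $f(x_1\wedge x_2\wedge\cdots)=f(x_1)\wedge f(x_2)\wedge\cdots$ in (\ref{cc}).

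For the middle equality, I would show that the sequence $\langle x_n\rangle$ itself converges in $X_1$ to $\bigwedge S$, after which continuity of $f$ delivers $f(x_n)\to f(\bigwedge S)=\bigwedge f(S)$. The essential $\preceq_1$-decrease of $\langle x_n\rangle$ transfers to essential $\subseteq$-decrease of the sequence of principal ideals $\langle x_n^\downarrow\rangle$ in $\mathcal{C}(X_1)$. Proposition \ref{app2prop} then gives $x_n^\downarrow\to\bigcap_n x_n^\downarrow$ in the Fell topology, and a routine verification shows this intersection equals $(\bigwedge S)^\downarrow$ (any $\preceq_1$-lower bound of $S$ sits in every $x_n^\downarrow$, and conversely anything in the intersection is dominated by every $x_n$, hence by $\bigwedge S$). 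Theorem \ref{continv} applies because $(X_1,\preceq_1)$ is locally compact and order-connected, giving $x_n\to\bigwedge S$ in $X_1$, and continuity of $f$ finishes the middle equality.

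The reasoning is largely mechanical once the right results are invoked; the only minor care required is to route the essential decrease through to Proposition \ref{app2prop} and to recognize that the intersection of the $x_n^\downarrow$ is $(\bigwedge S)^\downarrow$, which is a consequence of $(X_1,\preceq_1)$ being a complete $\wedge$-semilattice together with the fact that principal ideals are closed. No new obstacle arises beyond those already handled in the proof of Theorem \ref{app2thm}.
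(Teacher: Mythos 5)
Your proposal is correct and follows the same route as the paper's proof: filteredness of $\{x_1,x_2,\ldots\}$ plus Theorem \ref{app2thm} for the outer equality, and Proposition \ref{app2prop} followed by Theorem \ref{continv} and continuity of $f$ for the middle one. The extra details you supply (the explicit check of filteredness and the identification of $\bigcap_n x_n^{\downarrow}$ with $(\bigwedge S)^{\downarrow}$) are steps the paper leaves implicit, and both are verified correctly.
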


\begin{proof}
Let $\langle x_{n}\rangle$ be an essentially $\preceq_{1}$-decreasing
sequence in $X_{1}.$ Then, $\{x_{1},x_{2},...\}$ is a $\preceq_{1}$-filtered
subset\textit{\ }of\textit{\ }$X_{1},$ so Theorem \ref{app2thm} entails that
the first and third expressions of (\ref{cc}) are equal. Moreover, $\langle
x_{n}^{\downarrow}\rangle$ is an essentially $\subseteq$-decreasing sequence
in $\mathcal{C}^{\text{F}}(X),$ so by Proposition \ref{app2prop}, $%
x_{n}^{\downarrow}\rightarrow\bigcap_{i\geq1}x_{i}^{\downarrow}=(x_{1}\wedge
x_{2}\wedge\cdot\cdot\cdot)^{\downarrow}.$ By Theorem \ref{continv},
therefore, $x_{n}\rightarrow x_{1}\wedge x_{2}\wedge\cdot\cdot\cdot,$ so by
continuity of $f,$ we get the equality of the first and second expressions
of (\ref{cc}).
\end{proof}

Theorem \ref{app2thm} does not go so far as to ensure that the subject map $f
$ is an $\wedge$-homomorphism. The following example shows that this is not
warranted by the hypotheses of the theorem even in the compact case.

\begin{example}
Let $X_{1}:=[-1,1]^{2}$ and $X_{2}:=[-2,2].$ Endowing these sets with the
usual order and topology yields order-connected and complete topological
lattices whose carriers are metric continua. Consider the map $%
f:X_{1}\rightarrow X_{2}$ with $f(x,y):=x+y,$ which is both order-preserving
and continuous. And yet, $f((0,1)\wedge(1,0))=0$ while $f(0,1)\wedge
f(1,0)=1.$
\end{example}

Having said this, we note that strengthening the order-preservation
hypothesis in Theorem \ref{app2thm} to being an $\wedge$-homomorphism
furnishes a complete $\wedge$-homomorphism.

\begin{thm}
\label{homo}Let $(X_{1},\preceq_{1})$\ and $(X_{2},\preceq_{2})$\ be as in
Theorem \ref{app2thm}. Then, every continuous $\wedge$-homomorphism $%
f:X_{1}\rightarrow X_{2}$ is complete.
\end{thm}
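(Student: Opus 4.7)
The plan is to reduce the general case to the $\preceq_1$-filtered case already handled by Theorem \ref{app2thm}. Given a nonempty $S \subseteq X_1$, I would introduce the set of all finite infima drawn from $S$,
\[
T := \left\{ \bigwedge A : A \in \mathcal{F}_0(S) \right\},
\]
where $\mathcal{F}_0(S)$ is the collection of nonempty finite subsets of $S$. Since $X_1$ is an $\wedge$-semilattice, $T$ is well-defined, and because $\bigwedge A \wedge \bigwedge B = \bigwedge(A\cup B) \in T$ for any $A,B \in \mathcal{F}_0(S)$, the set $T$ is closed under binary infima, hence under all finite infima, and is therefore $\preceq_1$-filtered.

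Next I would show that $\bigwedge S$ and $\bigwedge T$ coincide: since $S \subseteq T$, any $\preceq_1$-lower bound of $T$ is a $\preceq_1$-lower bound of $S$; conversely, any $\preceq_1$-lower bound of $S$ is automatically below $\bigwedge A$ for every finite $A \subseteq S$, hence a $\preceq_1$-lower bound of $T$. Thus $S$ and $T$ have identical sets of lower bounds, so $\bigwedge T = \bigwedge S$. The analogous argument on the image side, combined with the $\wedge$-homomorphism property $f(\bigwedge A) = \bigwedge f(A)$ for any $A \in \mathcal{F}_0(S)$, shows that $f(T) = \{\bigwedge f(A) : A \in \mathcal{F}_0(S)\}$ has the same $\preceq_2$-lower bounds as $f(S)$; therefore $\bigwedge f(T) = \bigwedge f(S)$.

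Now I would invoke Theorem \ref{app2thm} applied to the $\preceq_1$-filtered set $T$: the continuous order-preserving map $f$ satisfies $f(\bigwedge T) = \bigwedge f(T)$. Chaining the three equalities gives
\[
f\!\left(\bigwedge S\right) = f\!\left(\bigwedge T\right) = \bigwedge f(T) = \bigwedge f(S),
\]
which is exactly the statement that $f$ is a complete $\wedge$-homomorphism. There is no serious obstacle here beyond verifying the two elementary ``same lower bounds'' assertions; the real analytic content (the Fell-topology convergence $y_A^{\downarrow} \to (\bigwedge T)^{\downarrow}$ and the continuity of the inverse canonical order-embedding via Theorem \ref{continv}) has already been absorbed into Theorem \ref{app2thm}, so this proof is essentially a purely order-theoretic reduction.
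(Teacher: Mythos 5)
Your proposal is correct and follows essentially the same route as the paper: both introduce the sub-$\wedge$-semilattice $T$ of finite infima from $S$, observe $\bigwedge S=\bigwedge T$, apply Theorem \ref{app2thm} to the filtered set $T$, and use $f(\bigwedge A)=\bigwedge f(A)$ to compare $\bigwedge f(T)$ with $\bigwedge f(S)$. The only (cosmetic) difference is that you establish the equality $\bigwedge f(T)=\bigwedge f(S)$ directly via identical lower-bound sets, while the paper proves just the inequality $\bigwedge f(S)\preceq_{2}\bigwedge f(T)$ and closes with the trivial reverse inequality from order-preservation.
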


\begin{proof}
Let $S$ be any nonempty subset of $X_{1},$ and again denote by $\mathcal{F}%
_{0}(S)$ the collection of all nonempty finite subsets of $S.$ Define 
\begin{equation*}
T:=\left\{ \bigwedge A:A\in\mathcal{F}_{0}(S)\right\} 
\end{equation*}
which is the smallest sub-$\wedge$-semilattice of $(X_{1},\preceq_{1})$ that
contains $S.$ Note that, for any $A\in\mathcal{F}_{0}(S),$ we have $%
\bigwedge f(S)\preceq_{2}\bigwedge f(A)=f(\bigwedge A),$ because $A\subseteq
S$ and $f$ is an $\wedge$-homomorphism. Thus, $\bigwedge f(S)$ is a $%
\preceq_{2}$-lower bound for $f(T),$ whence $\bigwedge
f(S)\preceq_{2}\bigwedge f(T)$. But, obviously, $\bigwedge S=\bigwedge T,$
so by Theorem \ref{app2thm}, $f(\bigwedge S)=f(\bigwedge T)=\bigwedge f(T).$
It follows that $\bigwedge f(S)\preceq_{2}f(\bigwedge S).$ As $f(\bigwedge
S)\preceq_{2}\bigwedge f(S)$ holds simply because $f$ is order-preserving,
we are done.
\end{proof}

It is well-known, and is easily proved, that a decreasing net in a compact
Hausdorff topological $\wedge$-semilattice converges to its infimum. (See,
for instance, Strauss \cite{Strauss}.) Using this fact, one can show easily
that every compact Hausdorff topological $\wedge$-semilattice is a complete $%
\wedge$-semilattice. Moreover, again using this result, we can show that a
continuous $\wedge$-homomorphism between compact Hausdorff topological $%
\wedge$-semilattices $(X_{1},\preceq_{1})$\ and $(X_{2},\preceq_{2})$ is a
complete $\wedge$-homomorphism. Theorem \ref{homo} shows that we can replace
the compactness requirements on the domain and codomain in this result with
weaker complete $\wedge$-semilattice requirements, provided that $%
(X_{1},\preceq_{1})$ is locally compact and order-connected.

\subsection{A Fixed Point Theorem}

A poset $(X,\preceq)$ is said to satisfy the \textit{countable-chain
condition} if $\bigwedge S$ exists for every nonempty countable chain $S$ in 
$(X,\preceq).$ This condition plays an important role in a variety of
order-theoretic fixed point theorems. One of the most well-known of these is
the following:

\bigskip{}

\noindent \textbf{The Tarski-Kantorovich Fixed Point Theorem.}\footnote{%
See \cite[p. 15]{Du-Gr} and \cite[p. 630]{J}} \textit{Let }$(X,\preceq)$ 
\textit{be a poset that satisfies the countable-chain condition, and }$%
f:X\rightarrow X$ \textit{a function such that }$f(x_{1}\wedge
x_{2}\wedge\cdot\cdot\cdot)=f(x_{1})\wedge f(x_{2})\wedge\cdot\cdot\cdot$ 
\textit{for every }$\preceq$\textit{-decreasing sequence }$(x_{m})$\textit{\
in }$X$\textit{. If }$f(x)\preceq x$\textit{\ for some }$x\in X$\textit{,
then }$f$ \textit{has a fixed point.}

\bigskip{}

The following is a topological variant of this fixed point theorem that
devolves from our result about the continuity of the map $%
x^{\downarrow}\mapsto x$.

\begin{thm}
Let $(X,\preceq)$ be a locally compact and order-connected topological poset
that satisfies the countable-chain condition, and $f:X\rightarrow X$ an
order-preserving and continuous map. If $f(x)\preceq x$\ for some $x\in X$,
then $f$ has a fixed point.
\end{thm}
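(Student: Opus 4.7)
The plan is to run the classical Tarski--Kantorovich iteration and use the machinery developed in Sections 4--5 to pass from the Fell-convergence of the resulting principal ideals back to convergence in $X$. Start with a point $x_{0}\in X$ satisfying $f(x_{0})\preceq x_{0}$ and define the orbit $x_{n+1}:=f(x_{n})$. Because $f$ is order-preserving, an immediate induction shows that $\langle x_{n}\rangle$ is a $\preceq$-decreasing sequence. By the countable-chain condition, the infimum $x^{\ast}:=\bigwedge\{x_{1},x_{2},\ldots\}$ exists in $X$, and a routine check gives $(x^{\ast})^{\downarrow}=\bigcap_{n\geq 1}x_{n}^{\downarrow}$.

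Next I would lift the analysis to the hyperspace. Since $\langle x_{n}^{\downarrow}\rangle$ is a $\subseteq$-decreasing (hence essentially $\subseteq$-decreasing) sequence in $\mathcal{C}(X)$, Proposition \ref{app2prop} (or even the last bullet of Section 3) yields
\[
x_{n}^{\downarrow}\longrightarrow\bigcap_{n\geq 1}x_{n}^{\downarrow}=(x^{\ast})^{\downarrow}
\]
in the Fell topology. Since $(X,\preceq)$ is locally compact and order-connected, Theorem \ref{continv} applies and transports this Fell-convergence back to $X$, giving $x_{n}\to x^{\ast}$.

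Now I would invoke continuity of $f$ to conclude $f(x_{n})\to f(x^{\ast})$, i.e.\ $x_{n+1}\to f(x^{\ast})$. Because the sequence $\langle x_{n+1}\rangle$ is a tail of $\langle x_{n}\rangle$, it also converges to $x^{\ast}$, and Hausdorffness of the topological poset $(X,\preceq)$ forces $f(x^{\ast})=x^{\ast}$.

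The only delicate point is the step where one identifies $(x^{\ast})^{\downarrow}$ as the Fell-limit of $\langle x_{n}^{\downarrow}\rangle$ and then pulls back to $X$ via Theorem \ref{continv}; this is exactly where one sees why local compactness and order-connectedness cannot be dropped, and it replaces the classical hypothesis that $f$ preserves countable descending infima. The rest is essentially bookkeeping, analogous to the corollary of Theorem \ref{app2thm} already proved in the paper.
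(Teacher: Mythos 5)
Your argument coincides with the paper's own proof: the same Tarski--Kantorovich iteration, the identification $\bigcap_{n}x_{n}^{\downarrow}=(x^{\ast})^{\downarrow}$, Proposition \ref{app2prop} for Fell convergence of the decreasing sequence of principal ideals, Theorem \ref{continv} to pull convergence back to $X$, and continuity of $f$ plus uniqueness of limits to close. The proof is correct and essentially identical in structure to the one in the paper.
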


\begin{proof}
Take any point $x\in X$ with $f(x)\preceq x$, and define $x_{1}:=x$ and $%
x_{n}:=f(x_{n-1})$ for each $n=2,3,...$. As $f$ is order-preserving, $%
\left\langle x_{n}\right\rangle $ is a $\preceq$-decreasing sequence in $X.$
In particular, $x_{\ast}:=x_{1}\wedge x_{2}\wedge\cdot\cdot\cdot$ exists by
the countable-chain condition. Moreover, $\left\langle
x_{n}^{\downarrow}\right\rangle $ is a $\subseteq$-decreasing sequence in $%
\mathcal{C}^{\text{F}}(X),$ so Proposition 5.6 entails $x_{n}^{\downarrow}%
\rightarrow x_{1}^{\downarrow}\cap
x_{2}^{\downarrow}\cap\cdot\cdot\cdot=x_{\ast}^{\downarrow}.$ By Theorem \ref%
{continv}, therefore, $x_{n}\rightarrow x_{\ast}$, whence, by continuity of $%
f,$ we find $f(x_{n})\rightarrow f(x_{\ast}).$ As $\left\langle
f(x_{n})\right\rangle $ is the sequence $\left\langle x_{n+1}\right\rangle ,$
we must conclude that $f(x_{n})\rightarrow x_{\ast}$. Thus: $%
f(x_{\ast})=x_{\ast}$.
\end{proof}

Checking for order-preservation and continuity properties are in most
applications easier than checking for the homomorphism property required in
the Tarski-Kantorovich theorem. This is the advantage of Theorem 5.11.

\subsection{On Anderson's Theorem}

Let $(X,\preceq)$ be a poset whose carrier is a topological space. Such a
poset is said to be \textit{locally convex }if the topology of $X$ has a
basis that consists of $\preceq$-convex sets. A major source of locally
convex posets is the following celebrated result.

\bigskip{}

\noindent \textbf{Anderson's Theorem.} \textit{Every locally compact and
connected Hausdorff topological lattice is locally convex.}

\bigskip{}

Anderson's original proof for this result \cite[Theorem 1]{And} is fairly
long. By contrast, a straightforward proof for it obtains if we use the
second embedding theorem of Section 4. Indeed, Anderson's theorem is an
immediate consequence of Theorem \ref{embed2} and the following elementary
observation.

\begin{prop}
\label{And}For any Hausdorff space $X,$ both $(\mathcal{C}^{\text{F}%
}(X),\subseteq)$ and $(\mathcal{C}_{\downarrow}^{\text{F}}(X),\subseteq)$
are locally convex.
\end{prop}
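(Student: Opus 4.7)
The plan is to observe that the subbasic open sets generating the Fell topology are, individually, monotone with respect to inclusion, from which convexity of the basic open sets (as finite intersections) is immediate.

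First I would recall from Section 3 that the Fell topology $\tau_F$ on $\mathcal{C}(X)$ is generated by two kinds of sets, namely $O^- = \{A \in \mathcal{C}(X) : A \cap O \neq \varnothing\}$ for $O$ open in $X$, and $(X \setminus K)^+ = \{A \in \mathcal{C}(X) : A \cap K = \varnothing\}$ for $K$ compact in $X$. I would then check that each such subbasic set is $\subseteq$-convex in $\mathcal{C}(X)$. In fact, each is even monotone: $O^-$ is $\subseteq$-increasing, because if $A \subseteq C$ and $A \cap O \neq \varnothing$, then $C \cap O \neq \varnothing$; and $(X \setminus K)^+$ is $\subseteq$-decreasing, because if $C \subseteq B$ and $B \cap K = \varnothing$, then $C \cap K = \varnothing$. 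Either property trivially implies $\subseteq$-convexity.

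Next I would invoke the elementary fact that the family of $\subseteq$-convex subsets of any poset is closed under finite (indeed arbitrary) intersections: if $A, B$ belong to each $S_i$ and $A \subseteq C \subseteq B$, then $C \in S_i$ for every $i$ by convexity, hence $C \in \bigcap_i S_i$. Since a base for $\tau_F$ is formed by the finite intersections of the subbasic sets described above, this yields a base of $\subseteq$-convex open sets, establishing that $(\mathcal{C}^{\text{F}}(X), \subseteq)$ is locally convex.

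For the second assertion I would simply transfer the result to the subspace $\mathcal{C}_\downarrow^{\text{F}}(X)$. The relative topology has a base consisting of sets $U \cap \mathcal{C}_\downarrow(X)$, where $U$ is a basic $\subseteq$-convex open set in $\mathcal{C}^{\text{F}}(X)$. If $A, B \in U \cap \mathcal{C}_\downarrow(X)$ and $C \in \mathcal{C}_\downarrow(X)$ satisfies $A \subseteq C \subseteq B$, then $C \in U$ by the convexity of $U$, so $C \in U \cap \mathcal{C}_\downarrow(X)$. Hence these sets are $\subseteq$-convex in the subposet $(\mathcal{C}_\downarrow(X), \subseteq)$, completing the proof. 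There is no substantive obstacle here; the content of the proposition lies entirely in the observation that the defining subbasis of the Fell topology is, by accident of definition, composed of sets monotone with respect to $\subseteq$.
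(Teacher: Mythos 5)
Your proof is correct and follows essentially the same route as the paper: the paper likewise verifies directly that each basic Fell-open set $O_{1}^{-}\cap\cdots\cap O_{k}^{-}\cap(X\backslash K)^{+}$ is $\subseteq$-convex (using exactly the monotonicity of the $O_{i}^{-}$ and of $(X\backslash K)^{+}$ that you isolate at the subbasic level) and then passes to the subspace $\mathcal{C}_{\downarrow}^{\text{F}}(X)$. Your factoring through "subbasic sets are monotone, hence convex, and convexity is closed under intersection" is just a mild reorganization of the same observation.
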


\begin{proof}
By definition, a basis for the Fell topology on $\mathcal{C}(X)$ consists of
sets of the form $O_{1}^{-}\cap\cdot\cdot\cdot\cap O_{k}^{-}\cap(X\backslash
K)^{+}$ where $k\in\mathbb{N},$ $O_{1},...,O_{k}$ are open subsets of $X$
and $K$ a compact subset of $X.$ Now suppose $A$ and $B$ are two elements of
such a set. Then, $A\subseteq C$ implies $C\in O_{i}^{-}$ for each $%
i=1,...,k,$ and $C\subseteq B$ implies $C\in(X\backslash K)^{+}.$ It follows
that the standard basis for the Fell topology on $\mathcal{C}(X)$ consists
of $\subseteq$-convex sets, i.e., $(\mathcal{C}^{\text{F}}(X),\subseteq)$ is
locally convex. The second assertion follows from the first.
\end{proof}

There is more one can say about locally compact and connected Hausdorff
topological lattices. Let $(X,\preceq)$ be a lattice, and $\left\langle
x_{\lambda}\right\rangle $\ a net in $X.$ For any $x\in X$, recall that $%
\left\langle x_{\lambda}\right\rangle $\ \textit{order-converges} to $x$ if
there exist two nets $\left\langle y_{\lambda}\right\rangle $\ and $%
\left\langle z_{\lambda}\right\rangle $\ in $X$ such that (i) $\left\langle
y_{\lambda}\right\rangle $\ is $\preceq$-increasing and $\left\langle
z_{\lambda}\right\rangle $ is $\preceq$-decreasing; (ii) $y_{\lambda}\preceq
x_{\lambda}\preceq z_{\lambda}$\ for every $\lambda$; and (iii) $\bigvee
y_{\lambda}=x=\bigwedge z_{\lambda}$. Now let \textit{$\mathcal{B}$ }be the
collection of all $\preceq$-convex subsets $U$\ of $X$ such that for any net 
$\left\langle x_{\lambda}\right\rangle $\ in $X$ and any $x\in U$\ such that 
$\left\langle x_{\lambda}\right\rangle $ order-converges to $x,$ the net $%
\left\langle x_{\lambda}\right\rangle $\ is eventually in $U.$ It is readily
checked that $\mathcal{B}$ is a basis for a topology on $X$. Lawson \cite%
{Law2} refers to the topology generated by this basis as\textit{\ }the 
\textit{convex-order topology} on $X$.

\begin{prop}
Let $(X,\preceq)$ be a locally compact and connected Hausdorff topological
lattice. Then, the topology of $X$ is coarser than the convex-order topology.
\end{prop}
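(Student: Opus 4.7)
The plan is to verify that every $X$-open neighborhood of a point contains a member of $\mathcal{B}$. Fix $x\in X$ and an $X$-open neighborhood $O$ of $x$. Anderson's theorem (proved above as an immediate consequence of Theorem \ref{embed2} and Proposition \ref{And}) guarantees that $(X,\preceq)$ is locally convex, so I pick an $X$-open $\preceq$-convex neighborhood $U$ of $x$ with $U\subseteq O$. The task then reduces to showing $U\in\mathcal{B}$.

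To this end, let $\langle x_\lambda\rangle$ be a net in $X$ that order-converges to some $x^{\ast}\in U$, with monotone witnesses $\langle y_\lambda\rangle$ (increasing, $\bigvee y_\lambda=x^{\ast}$) and $\langle z_\lambda\rangle$ (decreasing, $\bigwedge z_\lambda=x^{\ast}$) satisfying $y_\lambda\preceq x_\lambda\preceq z_\lambda$ for every $\lambda$. The crux is to upgrade these monotone order-convergences to convergence in the topology of $X$. For the decreasing side, note that $\langle z_\lambda^{\downarrow}\rangle$ is a $\subseteq$-decreasing net in $\mathcal{C}(X)$ whose intersection is $(x^{\ast})^{\downarrow}$ (an element $w$ lies in every $z_\lambda^{\downarrow}$ iff $w$ is a $\preceq$-lower bound for $\{z_\lambda\}$, iff $w\preceq x^{\ast}$). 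The monotone-net bullet from Section 3 thus gives $z_\lambda^{\downarrow}\to(x^{\ast})^{\downarrow}$ in the Fell topology. Since $X$ is a locally compact topological lattice, it is order-connected (as in the proof of Theorem \ref{embed2}), so Theorem \ref{continv} applies to yield $z_\lambda\to x^{\ast}$ in $X$. The order-dual argument, applied to $(X,\succeq)$, which inherits local compactness, connectedness and the lattice structure, shows that $\langle y_\lambda^{\uparrow}\rangle$ is $\subseteq$-decreasing with intersection $(x^{\ast})^{\uparrow}$, Fell-converges to $(x^{\ast})^{\uparrow}$, and hence $y_\lambda\to x^{\ast}$ in $X$.

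With both $y_\lambda\to x^{\ast}$ and $z_\lambda\to x^{\ast}$ in $X$, the two monotone witness nets lie eventually in the open neighborhood $U$. The $\preceq$-convexity of $U$, combined with the sandwich $y_\lambda\preceq x_\lambda\preceq z_\lambda$, then forces $x_\lambda$ itself to be eventually in $U$. Thus $U\in\mathcal{B}$, and the proof is complete.

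The real substance of the argument—and where local compactness plus the lattice hypothesis are essential—is the passage from monotone order-convergence to topological convergence; this is precisely what Theorem \ref{continv} provides (applied to $(X,\preceq)$ and to its dual), once one observes that suprema and infima of monotone nets translate into intersections of the associated principal filters and ideals. The rest is merely the interaction between $\preceq$-convexity and the sandwich provided by order-convergence.
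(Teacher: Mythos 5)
Your proof is correct and follows essentially the same route as the paper's: use Anderson's theorem to reduce to a $\preceq$-convex basic open set, convert the monotone witness nets into $\subseteq$-decreasing nets of principal ideals/filters, invoke Fell-convergence of decreasing nets to their intersection together with Theorem \ref{continv} (applied to $(X,\preceq)$ and its dual) to get topological convergence of the witnesses, and finish by convexity and the sandwich. The only cosmetic difference is that the paper cites Theorem \ref{embed2} where you cite Theorem \ref{continv} directly; these amount to the same thing.
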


\begin{proof}
By Anderson's Theorem, the topology of $X$ has a basis that consists of $%
\preceq$-convex sets. Let $U$ be any such set, and take any $x\in U.$
Suppose $\left\langle x_{\lambda}\right\rangle $ is a net in $X$ that
order-converges to $x.$ Then, by definition, there exist two nets $%
\left\langle y_{\lambda}\right\rangle $ and $\left\langle
z_{\lambda}\right\rangle $ in $X$ with the properties (i), (ii) and (iii)
stated above. By (i), $\langle z_{\lambda}^{\downarrow}\rangle$ is a
decreasing net, so $z_{\lambda}^{\downarrow}\rightarrow\bigcap
z_{\lambda}^{\downarrow}$ in the Fell topology. Since $\bigcap
z_{\lambda}^{\downarrow}=\left(\bigwedge z_{\lambda}\right)^{\downarrow},$
(iii) entails that $z_{\lambda}^{\downarrow}\rightarrow x^{\downarrow}$ in
the Fell topology, whence $z_{\lambda}\rightarrow x$ by Theorem \ref{embed2}%
. Similarly, $\langle y_{\lambda}^{\uparrow}\rangle$ is a decreasing net, so 
$y_{\lambda}^{\uparrow}\rightarrow\bigcap
y_{\lambda}^{\uparrow}=\left(\bigvee
y_{\lambda}\right)^{\uparrow}=x^{\uparrow}$ in the Fell topology. Therefore,
applying Theorem \ref{embed2} in the context of the dual lattice $%
(X,\succeq),$ we find $y_{\lambda}\rightarrow x.$ It follows that both $%
\left\langle y_{\lambda}\right\rangle $ and $\left\langle
z_{\lambda}\right\rangle $ are eventually in $U.$ As $U$ is $\preceq$%
-convex, (ii) implies that $\left\langle x_{\lambda}\right\rangle $ is
eventually in $U$. In view of the arbitrary choice of $x$ in $U,$ we
conclude that $U$ is open in the convex-order topology.
\end{proof}

In passing, we should note that this is only one-half of the story. In
general, it may or may not be possible to topologize a given lattice $%
(X,\preceq)$ to obtain a locally compact and connected Hausdorff topological
lattice. But Lawson \cite[Theorem 10]{Law2} shows that if there is such a
topology, then it must be finer than the convex-order topology. Combining
this with the proposition above thus establishes Lawson's surprising
finding: \textit{There is at most one topology on a lattice that would make
it a locally compact and connected Hausdorff topological lattice.}

\subsection{On Completely Order-Regular Posets}

A topological poset $(X,\preceq)$ is said to be a \textit{completely regular
ordered space} if

\begin{enumerate}
\item for every $x,y\in X$ such that $x\preceq y$ is false, there exists a
continuous and order-preserving $f:X\rightarrow\mathbb{R}$ with $f(x)>f(y);$
and

\item for every $x\in X$ and a neighborhood $V$ of $x$ in $X,$ there are
continuous functions $f:X\rightarrow[0,1]$ and $g:X\rightarrow[0,1]$ such
that $f$ is order-preserving, $g$ is order-reversing, $f(x)=1=g(x),$ and $%
\min\{f(z),g(z)\}=0$ for all $z\in X\backslash V$.
\end{enumerate}

\noindent This concept, which was introduced by Nachbin \cite{Na},
generalizes the topological notion of Tychonoff space. (For, $(X,=)$ is a
completely regular ordered space iff $X$ is completely regular.) Due to its
intimate connection to the theory of order-compactifications, completely
regular ordered spaces have received quite a bit of attention in the
literature. (See, among others, \cite{B-M,B-F,ChoePark,Na,Sal}.)

A topological poset $(Y,\trianglelefteq)$ is said to be an \textit{%
order-compactification} of $(X,\preceq)$ if $Y$ is compact, and there exists
a topological order-embedding from $X$ into $Y$ whose range is dense in $Y.$
A well-known theorem of Nachbin \cite{Na} says that $(X,\preceq)$ admits an
order-compactification if, and only if, it is a completely regular ordered
space. This fact alone motivates finding conditions for a topological poset
to qualify as a completely regular ordered space. In this application, we
use our embedding theorem to provide such conditions.

Let $(X,\preceq)$ be a locally compact and connected Hausdorff topological
lattice. By Theorem \ref{embed2}, $x\mapsto x^{\downarrow}$ is a
homeomorphism from $X$ onto $\mathcal{C}_{\downarrow}^{\text{F}}(X),$ and of
course, it is an order-isomorphism between $(X,\preceq)$ and $(\mathcal{C}%
_{\downarrow}(X),\subseteq).$ Furthermore, cl$(\mathcal{C}_{\downarrow}(X)),$
where the closure is taken with respect to the Fell topology, is a compact
subspace of $\mathcal{C}^{\text{F}}(X),$ because $\mathcal{C}^{\text{F}}(X)$
is itself a compact space. Thus, by Proposition \ref{whenposet}, $($cl$(%
\mathcal{C}_{\downarrow}(X)),\subseteq)$ is a compact topological poset, and
as such, it is an order-compactification of $(X,\preceq)$. In view of
Nachbin's said compactification theorem, therefore:

\begin{thm}
\label{app3thm}Every locally compact and connected Hausdorff topological
lattice is a completely regular ordered space.
\end{thm}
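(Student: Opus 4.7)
The plan is to construct an order-compactification of $(X,\preceq)$ directly from the machinery already developed, and then invoke Nachbin's order-compactification theorem (quoted in the preamble to this application): a topological poset is a completely regular ordered space if and only if it admits an order-compactification.

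First, I would apply Theorem \ref{embed2} to conclude that $\varphi(x):=x^{\downarrow}$ is a topological order-embedding of $(X,\preceq)$ into $(\mathcal{C}^{\text{F}}(X),\subseteq)$. Next I would set $Y:=\text{cl}(\mathcal{C}_{\downarrow}(X))$, where the closure is taken in $\mathcal{C}^{\text{F}}(X)$. Since $\mathcal{C}^{\text{F}}(X)$ is compact (this is the first item in the list of properties of the Fell topology in Section 3), so is the closed subset $Y$. Because $X$ is locally compact and Hausdorff, Proposition \ref{whenposet} guarantees that $\subseteq$ is closed in $\mathcal{C}(X)\times\mathcal{C}(X)$ relative to the Fell topology, and hence $(Y,\subseteq)$, as a subspace of the topological poset $(\mathcal{C}^{\text{F}}(X),\subseteq)$, is itself a compact topological poset.

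Finally, the composition of $\varphi$ with the inclusion $\mathcal{C}_{\downarrow}(X)\hookrightarrow Y$ is a topological order-embedding of $(X,\preceq)$ into $(Y,\subseteq)$, and its range $\mathcal{C}_{\downarrow}(X)$ is dense in $Y$ by construction. Thus $(Y,\subseteq)$ is an order-compactification of $(X,\preceq)$ in the precise sense of the definition given above, and Nachbin's theorem yields the conclusion. There is no real obstacle: every ingredient (compactness of the Fell hyperspace, the closedness of $\subseteq$ under local compactness, and the topological order-embedding $\varphi$) has been established earlier, and the proof is essentially a two-line assembly, modulo verifying that density and the order-embedding property are preserved when one passes from $\mathcal{C}_{\downarrow}(X)$ to its closure, both of which are immediate.
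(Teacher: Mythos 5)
Your proposal is correct and follows essentially the same route as the paper: Theorem \ref{embed2} gives the topological order-embedding $x\mapsto x^{\downarrow}$, the closure of $\mathcal{C}_{\downarrow}(X)$ in the compact space $\mathcal{C}^{\text{F}}(X)$ is a compact topological poset by Proposition \ref{whenposet}, and this furnishes an order-compactification to which Nachbin's theorem applies. No gaps.
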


This observation is a companion to a related result of Burgess and
Fitzpatrick \cite{B-F}. To see this, we follow Priestley \cite{Pri} in
referring to a topological poset $(X,\preceq)$ as an $I$\textit{-space} if $%
O^{\uparrow}$ and $O^{\downarrow}$ are open subsets of $X$ for every open $%
O\subseteq X.$ In this terminology, Corollary 4.5 of \cite{B-F} entails that 
\textit{every locally compact and locally convex }$I$\textit{-space }$%
(X,\preceq)$ \textit{is a completely regular ordered space}. Therefore, in
view of Anderson's Theorem, Theorem \ref{app3thm} replaces the properties of
local convexity and being an $I$-space in the noted result of \cite{B-F}
with connectedness in the context of locally compact Hausdorff topological
lattices.

\section{Canonical Order-Embedding of a Topological Po-Group}

In this section, we show that one can replace the lattice structure in
Theorem \ref{embed1} with an alternative algebraic structure that is
compatible with the ordering of the ambient poset.

By a \textit{po-group}, we mean an ordered triple $(X,\cdot,\preceq)$ where $%
(X,\cdot)$ is a group whose law of composition is written multiplicatively, $%
(X,\preceq)$ is a poset, and $\preceq$ is translation-invariant relative to $%
\cdot,$ that is, $xz\preceq yz$ and $zx\preceq zy$ for every $x,y,z\in X$
with $x\preceq y.$ We denote the identity of $(X,\cdot)$ by $\mathbf{1},$
and adopt the standard notation for \textit{Minkowski product}. That is, $%
A\cdot B:=\{ab:(a,b)\in A\times B\}$ for any nonempty $A,B\subseteq X,$ but
as usual, we write $x\cdot A$ instead of $\{x\}\cdot A,$ and similarly $%
A\cdot x:=A\cdot\{x\},$ for any $x\in X$. In the context of a po-group $%
(X,\cdot,\preceq),$ we have $x\preceq y$ iff $xy^{-1}\in\mathbf{1}%
^{\downarrow}$ iff $y^{-1}x\in\mathbf{1}^{\downarrow},$ so the partial order 
$\preceq$ is entirely determined by the principal ideal $\mathbf{1}%
^{\downarrow}$. In particular, $x\cdot\mathbf{1}^{\downarrow}=x^{\downarrow}=%
\mathbf{1}^{\downarrow}\cdot x$ for any $x\in X.$

For any $x,y\in X,$ if $z\preceq x$ and $w\preceq y,$ then by translation
invariance of $\preceq$ we get $zw\preceq zy\preceq xy,$ so $%
zw\in(xy)^{\downarrow}.$ Thus, $x^{\downarrow}\cdot
y^{\downarrow}\subseteq(xy)^{\downarrow}$. Conversely, if $z\preceq xy,$
then $w:=z(xy)^{-1}\preceq\mathbf{1},$ so 
\begin{equation*}
z=w(xy)=(wx)y\in x^{\downarrow}\cdot y\subseteq x^{\downarrow}\cdot
y^{\downarrow}\text{.} 
\end{equation*}
Conclusion: $x^{\downarrow}\cdot y^{\downarrow}=(xy)^{\downarrow}$. Using
this observation, we find readily that $(\mathcal{C}_{\downarrow}(X),\cdot)$
is a group where the law of composition is the Minkowski product, the
identity is $\mathbf{1}^{\downarrow}$, and $(x^{\downarrow})^{-1}=(x^{-1})^{%
\downarrow}$ foe each $x\in X.$ Besides, if $x^{\downarrow}\subseteq
y^{\downarrow}$, then $x^{\downarrow}\cdot z^{\downarrow}=(x\cdot
z)^{\downarrow}\subseteq(y\cdot z)^{\downarrow}=y^{\downarrow}\cdot
z^{\downarrow}$ by translation-invariance of $\preceq$, and similarly, $%
z^{\downarrow}\cdot x^{\downarrow}\subseteq z^{\downarrow}\cdot
y^{\downarrow}$. It follows that $(\mathcal{C}_{\downarrow}(X),\cdot,%
\subseteq)$ is a po-group, and obviously, $x\mapsto x^{\downarrow}$ is a
group isomorphism, as well as an order-isomorphism, from $(X,\cdot,\preceq)$
onto this po-group.

A \textit{topological po-group }is a po-group $(X,\cdot,\preceq)$ such that $%
(X,\cdot)$ is a topological group and $(X,\preceq)$ is a topological poset.
The rich structure of such po-groups allows us to sharpen the continuity
theorems of Section 4. In particular, the situation for the continuity of
the canonical order-embedding improves markedly in this framework.

\begin{prop}
\label{po-group1}Let $(X,\cdot,\preceq)$ be a topological po-group. Then,
the map $x\mapsto x^{\downarrow}$ from $X$ into $\mathcal{C}^{\text{F}}(X)$
is continuous.
\end{prop}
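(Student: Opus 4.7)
The plan is to mimic the structure of the proof of Theorem \ref{cont}, but with the semilattice operation $\wedge$ replaced by the group operation. The central observation, already recorded at the outset of Section 6, is that in a po-group one has $x^{\downarrow}=x\cdot\mathbf{1}^{\downarrow}$ for every $x\in X$. Thus the canonical map is, up to identification, the orbit map of $x$ acting on the fixed closed set $\mathbf{1}^{\downarrow}$. Fix an arbitrary $x\in X$; to establish continuity at $x$ it suffices to show that every subbasic Fell neighborhood of $x^{\downarrow}$ contains $y^{\downarrow}$ for all $y$ in some open neighborhood of $x$. There are two kinds of such subbasic neighborhoods, treated separately below.

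For a neighborhood of the form $O^{-}$ with $O\subseteq X$ open and $x^{\downarrow}\cap O\neq\varnothing$, I would pick $z\in x^{\downarrow}\cap O$ and set $w:=x^{-1}z$, so that $w\in\mathbf{1}^{\downarrow}$ (since $z\preceq x$ implies $w=x^{-1}z\preceq\mathbf{1}$ by translation-invariance). Continuity of right-multiplication by $w$ in the topological group $X$ yields a neighborhood $U$ of $x$ with $yw\in O$ for every $y\in U$. Translation-invariance of $\preceq$ then gives $yw\preceq y\cdot\mathbf{1}=y$, so $yw\in y^{\downarrow}\cap O$ and consequently $y^{\downarrow}\in O^{-}$ for every $y\in U$. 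This is the step where the po-group structure substitutes cleanly for the semilattice operation used in the analogous portion of Theorem \ref{cont}.

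For a neighborhood of the form $(X\setminus K)^{+}$ with $K\subseteq X$ compact and $x^{\downarrow}\cap K=\varnothing$, the argument does not really require the group structure at all; it is a general fact about topological posets. For each $k\in K$ one has $(k,x)\notin\preceq$, so by closedness of $\preceq$ in $X\times X$ there are open sets $V_{k}\ni k$ and $W_{k}\ni x$ with $(V_{k}\times W_{k})\cap\preceq=\varnothing$. Compactness of $K$ furnishes finitely many $V_{k_{1}},\dots,V_{k_{n}}$ covering $K$; setting $U:=W_{k_{1}}\cap\cdots\cap W_{k_{n}}$, one checks that for every $y\in U$ and every $k\in K$ the relation $k\preceq y$ fails, so $y^{\downarrow}\cap K=\varnothing$, i.e.\ $y^{\downarrow}\in(X\setminus K)^{+}$. (Alternatively, one could reproduce the net-theoretic contradiction used in the proof of Theorem \ref{cont} verbatim.)

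There is no real obstacle here: the proof is genuinely a streamlined parallel of Theorem \ref{cont}, in which the continuity of the group multiplication plus the identity $x^{\downarrow}=x\cdot\mathbf{1}^{\downarrow}$ do the same work that continuity of $\wedge$ did in the semilattice setting. The only place where one must exercise a little care is in verifying that the element $w=x^{-1}z$ produced in the open-set case genuinely lies in $\mathbf{1}^{\downarrow}$ and that $yw\in y^{\downarrow}$, both of which are immediate consequences of the translation-invariance of $\preceq$.
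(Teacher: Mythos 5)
Your proof is correct. The open-set half is essentially the paper's own argument: the paper takes $U:=xz^{-1}\cdot O$ and checks that $a=(zx^{-1})y\preceq y$ whenever $y=(xz^{-1})a\in U$, which is just the left-translation mirror image of your right-multiplication-by-$w=x^{-1}z$ version; both rest on the same appeal to translation-invariance and continuity of the group operation. Where you genuinely diverge is the compact-set half. The paper stays inside group theory: it invokes the standard separation property of topological groups to produce a neighborhood $V$ of $\mathbf{1}$ with $(x^{\downarrow}\cdot V)\cap K=\varnothing$, and then verifies the inclusion $(x\cdot V)^{\downarrow}\subseteq x^{\downarrow}\cdot V$ via translation-invariance. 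You instead argue purely order-topologically: since $x^{\downarrow}\cap K=\varnothing$, each $k\in K$ has $(k,x)\notin\preceq$, closedness of $\preceq$ yields box neighborhoods $V_{k}\times W_{k}$ missing $\preceq$, and a finite subcover of $K$ produces the required neighborhood $U=W_{k_{1}}\cap\cdots\cap W_{k_{n}}$ of $x$. This is valid, uses no group (or semilattice) structure whatsoever, and in fact isolates a general lemma: in any topological poset the preimage under $x\mapsto x^{\downarrow}$ of every set $(X\setminus K)^{+}$ is open. As you remark, the same observation would equally replace the net-theoretic contradiction in the corresponding step of Theorem \ref{cont}. What the paper's route buys is a self-contained proof whose two halves are stylistically parallel within the po-group framework; what yours buys is a cleaner division of labor, confining the algebraic hypotheses to the lower ($O^{-}$) half and exposing the upper half as a fact about topological posets in general.
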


\begin{proof}
Take any $x\in X$ and any open subset $O$ of $X\ $with $x^{\downarrow}\in
O^{-}$. Then, $x^{\downarrow}\cap O\neq\varnothing$ which means there is a $%
z\in O$ with $zx^{-1}\preceq\mathbf{1}.$ We define $U:=xz^{-1}\cdot O$ which
is an open neighborhood of $x.$ Clearly, if $y\in U$, then $y=(xz^{-1})a$
for some $a\in O,$ so $a=(zx^{-1})y\preceq y,$ whence $y^{\downarrow}$
intersects $O$. Thus: $\{y^{\downarrow}:y\in U\}\subseteq O^{-}$.

Next, take any compact subset $K$ of $X$ such that $x^{\downarrow}\in(X%
\backslash K)^{+}$. Then, $x^{\downarrow}\cap K=\varnothing,$ so, since $%
x^{\downarrow}$ is a closed and $K$ is compact, a standard result of the
theory of topological groups says that there exists an open neighborhood $V$
of $\mathbf{1}$ in $X$ with $(x^{\downarrow}\cdot V)\cap K=\varnothing$. But 
$(x\cdot V)^{\downarrow}\subseteq x^{\downarrow}\cdot V,$ because if $%
z\preceq xv$ for some $v\in V,$ then for $w:=z(xv)^{-1},$ we have $w\preceq%
\mathbf{1},$ hence $wx\preceq x$, and we thus find $z=w(xv)=(wx)v\in
x^{\downarrow}\cdot V$. Consequently, $(x\cdot V)^{\downarrow}\cap
K=\varnothing,$ that is, $\{z^{\downarrow}:z\in x\cdot
V\}\subseteq(X\backslash K)^{+}$.
\end{proof}

We next combine Proposition \ref{po-group1} with Theorem \ref{continv} to
obtain our third embedding theorem.

\begin{thm}
\label{po-group2}Let $(X,\cdot,\preceq)$ be a locally compact and
order-connected topological po-group. Then, the map $x\mapsto x^{\downarrow}$
topologically order-embeds $(X,\preceq)$ in $\mathcal{C}^{\text{F}}(X)$.
\end{thm}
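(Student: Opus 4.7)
The plan is to assemble Theorem \ref{po-group2} as a direct combination of two ingredients already in place in the paper: Proposition \ref{po-group1}, which yields continuity of the canonical order-embedding $\varphi: x \mapsto x^{\downarrow}$ in any topological po-group (with no further hypotheses), and Theorem \ref{continv}, which yields continuity of its inverse whenever the underlying topological poset is locally compact and order-connected. This parallels exactly the passage from Theorem \ref{cont} and Theorem \ref{continv} to Theorem \ref{embed1}, with the $\wedge$-semilattice structure replaced by the po-group structure for the forward continuity step.

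First I would verify that the ambient objects really are topological posets, so that the definition of a topological order-embedding is applicable. The po-group hypothesis makes $(X,\preceq)$ a topological poset by definition, which in turn forces $X$ to be Hausdorff (as noted after Proposition \ref{closed ord}). Local compactness of $X$, together with Proposition \ref{whenposet}, ensures that $(\mathcal{C}^{\text{F}}(X),\subseteq)$ is itself a topological poset, so it is a legitimate codomain. That $\varphi$ is an order-embedding is the general poset fact rehearsed in the introduction: $x \preceq y$ iff $x^{\downarrow} \subseteq y^{\downarrow}$, with injectivity automatic from antisymmetry.

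Next I would invoke the two continuity results. Proposition \ref{po-group1} gives continuity of $\varphi$ from $X$ into $\mathcal{C}^{\text{F}}(X)$. For the inverse, Theorem \ref{continv} applies because $(X,\preceq)$ is a locally compact and order-connected topological poset by hypothesis; it delivers continuity of $\varphi^{-1}$ from $\mathcal{C}_{\downarrow}(X)$, endowed with the relative Fell topology, onto $X$. Combining the two, $\varphi$ is a homeomorphism from $X$ onto $\mathcal{C}_{\downarrow}(X) = \varphi(X)$ that also preserves and reflects the order, which is exactly the definition of a topological order-embedding given in Section 2.

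In this sense there is no real obstacle: the substantive work has already been done in Proposition \ref{po-group1} (where the group translation trick was used to produce the required neighborhoods without any semilattice or order-connectedness hypothesis) and in Theorem \ref{continv} (whose proof used local compactness and order-connectedness directly, with no algebraic assumption on $X$). The only thing to be careful about is making sure that the hypotheses required by Theorem \ref{continv}, stated for general topological posets, are correctly transferred through the po-group hypothesis, which is straightforward since the po-group axioms already package the topological poset condition.
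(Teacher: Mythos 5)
Your proposal is correct and matches the paper's own derivation exactly: Theorem \ref{po-group2} is obtained by combining Proposition \ref{po-group1} (continuity of $x\mapsto x^{\downarrow}$ for topological po-groups) with Theorem \ref{continv} (continuity of the inverse under local compactness and order-connectedness), precisely as you describe. Your additional hypothesis-checking is sound and only makes explicit what the paper leaves implicit.
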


A \textit{partially ordered topological linear space} $X$ is a Hausdorff
topological (real) linear space $X$ which is endowed with a
translation-invariant closed partial order $\preceq$ that satisfies $\lambda
x\preceq\lambda y$ for every $x,y\in X$ and $\lambda\geq0$. Such a space is
an Abelian topological po-group relative to the addition operation of the
space. Moreover, its order-intervals are convex in the geometric sense, and
hence connected. Finally, if it is finite-dimensional, it is locally
compact. Thus, as an immediate consequence of Theorem \ref{po-group2}, we
get:

\begin{cor}
\label{po-group3}Let $X$ be a finite-dimensional partially ordered
topological linear space. Then the canonical order-embedding is a
topological embedding from $X$ into $\mathcal{C}^{\text{F}}(X)$.
\end{cor}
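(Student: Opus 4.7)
The plan is simply to check that a finite-dimensional partially ordered topological linear space satisfies the three hypotheses of Theorem \ref{po-group2}: it is a topological po-group, it is locally compact, and it is order-connected. The paragraph preceding the corollary already sketches all three points, so the proof amounts to making them precise and then invoking the theorem.

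First I would check the po-group structure. Since $X$ is a Hausdorff topological linear space, addition is continuous and so $(X,+)$ is a Hausdorff topological group (with the additive identity $\mathbf{0}$). The partial order $\preceq$ is closed by the definition of a partially ordered topological linear space, and it is translation-invariant by assumption, so $(X,+,\preceq)$ is a topological po-group in the sense of Section 6. Note that the axiom $\lambda x\preceq\lambda y$ for $\lambda\geq0$ is not actually needed for the po-group structure itself; it will only enter through order-connectedness.

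Next I would verify local compactness and order-connectedness. For local compactness, any finite-dimensional Hausdorff topological linear space is linearly homeomorphic to some $\mathbb{R}^{n}$ (this is classical), and $\mathbb{R}^{n}$ is locally compact. For order-connectedness, fix $x\preceq y$ and take any two points $a,b\in x^{\uparrow}\cap y^{\downarrow}$. For each $\lambda\in[0,1]$, translation-invariance together with the scaling axiom $\lambda u\preceq\lambda v$ for $\lambda\geq0$ give
\begin{equation*}
x=\lambda x+(1-\lambda)x\preceq\lambda a+(1-\lambda)b\preceq\lambda y+(1-\lambda)y=y,
\end{equation*}
so the segment from $a$ to $b$ lies in $x^{\uparrow}\cap y^{\downarrow}$. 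Hence the order-interval is convex, and in particular path-connected and connected. Therefore $(X,\preceq)$ is order-connected.

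With these three facts in hand, Theorem \ref{po-group2} applies directly to conclude that $x\mapsto x^{\downarrow}$ is a topological order-embedding of $(X,\preceq)$ into $\mathcal{C}^{\text{F}}(X)$. There is no real obstacle here; the only step that might deserve a sentence of commentary is the order-convexity of intervals, since it is the one place where the scaling axiom $\lambda x\preceq\lambda y$ (for $\lambda\geq0$) is genuinely used, beyond the po-group axioms.
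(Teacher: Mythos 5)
Your proposal is correct and follows exactly the route the paper takes: the paragraph preceding the corollary in the paper is precisely this verification (Abelian topological po-group under addition, convex hence connected order-intervals, local compactness from finite-dimensionality), followed by an appeal to Theorem \ref{po-group2}. Your write-up merely makes the convexity computation and the role of the scaling axiom explicit, which is a faithful elaboration rather than a different argument.
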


The following are two simple applications of these results.

\begin{example}
Let $(X,\cdot,\preceq)$ be a locally compact, second-countable and
order-connected topological po-group. Then, there is a radially convex
metric on $X$ that metrizes the topology of $X.$ This is proved exactly as
we proved Theorem \ref{U-C} but this time using Theorem \ref{po-group2}
instead of Theorem \ref{embed1}.\footnote{%
An interesting question at this junction is if one can choose this metric
left-invariant as well as radially convex. We do not know the answer to this
at present.}
\end{example}

\begin{example}
Let Sym$(n)$ stand for the set of all $n\times n$ symmetric real matrices.
We view this set as a linear space relative to the usual matrix addition and
scalar multiplication operations. Recall that a matrix $A\in$ Sym$(n)$ is
said to be \textit{positive semidefinite} if $\left\langle Ax,x\right\rangle
\geq0$ for every (real) $n$-vector $x$. In turn, the \textit{Loewner order} $%
\succcurlyeq_{\text{L}}$ is the partial order on Sym$(n)$ defined by $%
A\succcurlyeq_{\text{L}}B$ iff $A-B$ is positive semidefinite. Then, $($Sym$%
(n),\succcurlyeq_{\text{L}})$ is a finite-dimensional partially ordered
topological linear space (where the topology is naturally inherited from the
usual topology of $\mathbb{R}^{n\times n}$). By Corollary \ref{po-group3},
therefore, we may embed this space, both topologically and
order-theoretically, in $\mathcal{C}^{\text{F}}($Sym$(n))$.

We should note that $($Sym$(n),\succcurlyeq_{\text{L}})$ is not an $\wedge$%
-semilattice. Indeed, a well-known theorem of Kadison \cite{Kadison} says
that $($Sym$(n),\succcurlyeq_{\text{L}})$ is an antilattice, that is, the
greatest lower bound of two symmetric $n\times n$ matrices with respect to
the Loewner order exists iff these matrices are $\succcurlyeq_{\text{L}}$%
-comparable. Thus, this example is not covered by any of our earlier
embedding theorems.
\end{example}

A natural question at this point is if the rich setting of topological
po-groups allows us to delete either the local compactness or the
order-connectedness hypotheses from the statement of Theorem \ref{po-group2}%
. We conclude with two examples that demonstrate that this is not so, even
in the Abelian case.

\begin{example}
\label{Hilbertexample}Let $\mathbf{0}$ stand for the zero sequence, and let $%
\{e_{1},e_{2},...\}$ be the standard orthonormal base for the Hilbert space $%
\ell_{2}$. We endow $\ell_{2}$ with the coordinatewise order $\preceq$ (that
is, $x\preceq y$ iff $x_{i}\leq y_{i}$ for each $i=1,2,...$). Relative to
this order and its usual norm-topology, $\ell_{2}$ is a Hausdorff
topological lattice. (It is, in fact, a Banach lattice.) As such $\ell_{2}$
has convex order-intervals, so it is order-connected, but of course, it is
not locally compact. We claim that $e_{n}^{\downarrow}\overset{\text{K-P}}{%
\rightarrow}\mathbf{0}^{\downarrow}$. To see this, first, take any $x\in%
\mathbf{0}^{\downarrow},$ and note that $x=e_{n}+(x-e_{n})\in e_{n}+\mathbf{0%
}^{\downarrow}=e_{n}^{\downarrow}$, so the $n$th term of the constant
sequence $\left\langle x,x,..,\right\rangle $ is in $e_{n}^{\downarrow}$ for
each $n\in\mathbb{N},$ and this sequence obviously converges to $x.$ Second,
let $\langle n_{k}\rangle$ be a strictly increasing sequence of positive
integers, and take any convergent sequence $\langle x_{n_{k}}\rangle$ with $%
x_{n_{k}}\in e_{n_{k}}^{\downarrow}$ for each $k$. We need to show that $%
x:=\lim x_{n_{k}}$ belongs to $\mathbf{0}^{\downarrow}$, that is, $x_{i}\leq0
$ for each $i\in\mathbb{N}$. To this end, fix an arbitrary positive integer $%
i.$ Whenever $k>i,$ we have $n_{k}>i,$ so $x_{n_{k},i}\leq e_{n_{k},i}=0.$
But $\left\Vert x_{n_{k}}-x\right\Vert _{2}\rightarrow0$ implies $%
x_{n_{k},j}-x_{j}\rightarrow0$ for each $j\in\mathbb{N},$ so it follows that 
$x_{i}\leq0$. In view of the arbitrary choice of $i,$ we thus find $x\in%
\mathbf{0}^{\downarrow}$, as desired. Conclusion: $e_{n}^{\downarrow}\overset%
{\text{K-P}}{\rightarrow}\mathbf{0}^{\downarrow}$, so $e_{n}^{\downarrow}%
\rightarrow\mathbf{0}^{\downarrow}$ in the Fell topology. But, of course, $%
\left\langle e_{n}\right\rangle $ does not converge to $\mathbf{0}.$
\end{example}

We remark that $(\mathcal{C}_{\downarrow}^{\text{F}}(\ell_{2}),+)$ is not a
topological group: $x^{\downarrow}\mapsto(-x)^{\downarrow}$ is not a
continuous self-map on $\mathcal{C}_{\downarrow}^{\text{F}}(\ell_{2})$.
Indeed, while $e_{n}^{\downarrow}\rightarrow\mathbf{0}^{\downarrow}$ in the
Fell topology, the open ball around $\mathbf{0}$ of radius $\frac{1}{2}$
fails to hit $(-e_{n})^{\downarrow}$ for any $n\in\mathbb{N},$ so $%
(-e_{n})^{\downarrow}$ does not converge to $\mathbf{0}^{\downarrow}$ in $%
\mathcal{C}_{\downarrow}^{\text{F}}(\ell_{2})$. On the other hand, if $%
(X,+,\preceq)$ is a topological po-group that satisfies the conditions of
Theorem \ref{po-group2}, then it is plain that $(\mathcal{C}_{\downarrow}^{%
\text{F}}(X),+,\subseteq)$ is such a po-group as well.

\begin{example}
Let $X$ stand for the set of all square-summable $\mathbb{Z}$-valued
sequences. We view this set as a topological subgroup of $\ell_{2}$, and
endow it with the coordinatewise order $\preceq$. It is plain that all but
finitely many terms of any element of $X$ are zero. Thus, the topology that $%
X$ inherits from $\ell_{2}$ is discrete, and hence, locally compact.
However, as the only connected subsets of $X$ are the singletons, $%
(X,\preceq)$ is not order-connected. And sure enough, the inverse of the
order-embedding $x\mapsto x^{\downarrow}$ is not continuous on $\mathcal{C}%
_{\downarrow}^{\text{F}}(X).$ Indeed, we have $e_{n}^{\downarrow}\overset{%
\text{K-P}}{\rightarrow}\mathbf{0}^{\downarrow}$ but $\left\langle
e_{n}\right\rangle $ does not converge to $\mathbf{0}.$
\end{example}

\section{An Open Problem}

In this paper we looked at when a given topological poset $(X,\preceq)$ can
be embedded in the Fell hyperspace $\mathcal{C}^{\text{F}}(X)$ topologically
as well as order-theoretically. While our results identify a fairly rich
class of topological posets for which this is possible, in our embedding
theorems we worked exclusively with the canonical order-embedding $x\mapsto
x^{\downarrow}.$ This is certainly a natural starting point, for, as $%
x\mapsto x^{\downarrow}$ is always an order-embedding, it reduces the
problem to merely studying the bicontinuity of this map. However, one is
likely to obtain other embedding results by considering alternative maps
from $X$ into $\mathcal{C}(X).$ (Indeed, in Example \ref{start} we have
observed that the required embedding can be found even when $x\mapsto
x^{\downarrow}$ fails to be continuous.) Pursuing this direction, and
characterizing those topological posets that can be topologically
order-embedded in $\mathcal{C}^{\text{F}}(X),$ is left for future research.

\bigskip{}

\noindent \textsc{Acknowledgement }We thank professors Gerhard Gierz, Jinlu
Li and Nik Weaver for their insightful comments in the development stage of
this work.

\bigskip{}

\end{document}